\def\a{\alpha}		 \def\b{\beta}		  \def\g{\gamma}
		 		 \def\e{\varepsilon}
\def\la{\lambda}	 \def\om{\omega}		
	 \def\s{\sigma}			
\def\z{\zeta}		 			
	 \def\C{{\mathbb C}}
\def\D{{\mathbb D}}	 
	 \def\N{{\mathbb N}}
	 \def\R{{\mathbb R}}
\def\T{{\mathbb T}}	 
	 \def\cb{{\mathcal B}}
\def\({\left(}		 \def\){\right)}
\newtheorem{prop}{\sc Proposition}
\newtheorem{lem}{\sc Lemma}
\newtheorem{thm}{\sc Theorem}
\newtheorem{rk}{\sc Remark}
\newtheorem{defn}{\sc Definition}
\newcommand{\bin}[2]{
 \left(
  \begin{array}{@{}c@{}}
	#1 \\ #2
  \end{array}
 \right)			 }
\begin{document}
%%%%%%%%%%%%%%%%%%	Title, etc.	 %%%%%%%%%%%%%%%%%%
\title[The Krzy\.{z} conjecture revisited]
{The Krzy\.{z} conjecture revisited}
%%%%%%%%%%%%%%%%%%%%%%%%%%%%%%%%%%%%%%%%%%%%%%%%%%%
%%%%%%%%%%%%%%%%%% Information for the first author
\author[Mart\'{\i}n]{Mar\'{\i}a J. Mart\'{\i}n}
\address{University of Eastern Finland, Department of Physics and Mathematics, P.O. Box 111, 80101 Joensuu, Finland} \email{maria.martin@uef.fi}
%%%%%%%%%%%%%%%%%% Information for the second author
\author[Sawyer]{Eric T. Sawyer}
\address{Department of Mathematics, McMaster University, Hamilton, ON  48824,
Canada} \email{sawyer@mcmaster.ca}
%%%%%%%%%%%%%%%%%% Information for the third author
\author[Uriarte-Tuero]{Ignacio Uriarte-Tuero}
\address{Department of Mathematics, Michigan State University, East Lansing,
MI 48824, U.S.A.} \email{ignacio@math.msu.edu}
%%%%%%%%%%%%%%%%%% Information for the fourth author
\author[Vukoti\'c]{Dragan Vukoti\'c}
\address{Departamento de Matem\'aticas, Universidad Aut\'onoma de
Madrid, 28049 Madrid, Spain} \email{dragan.vukotic@uam.es}
 \urladdr{http://www.uam.es/dragan.vukotic}
%\dedicatory{Dedicated to ... on the occasion of ...}
%\thanks{}
%%% General info
\subjclass[2010]{30H05, 30C45, 30C50}
\date{17 December, 2014}
%%%%%%%%%%%%%%%%%%%%%%%%%%%
%%% Abstract
%%%%%%%%%%%%%%%%%%%%%%%%%%%
\begin{abstract}
The Krzy\.z conjecture concerns the largest values of the Taylor coefficients of a non-vanishing analytic function bounded by one in modulus in the unit disk. It has been open since 1968 even though  information on the structure of extremal functions is available. The purpose of this paper is to collect various conditions that the coefficients of an extremal function (and various other quantities  associated with it) should satisfy if the conjecture is true and to show that each one of these properties is equivalent to the conjecture itself. This may provide several possible starting points for future attempts at solving the problem.
\end{abstract}
\maketitle
\par
%%%%%%%%%%%%%%%%%%%%%%%%%%%
\section*{Introduction}
%%%%%%%%%%%%%%%%%%%%%%%%%%%
\par
%%%%%%%%%%%%%%%%%%%%%
\textbf{Formulation of the problem}. Denote by $\D$ the unit disk and by $\cb_\ast$  the class of all analytic functions $f$ in $\D$ such that $0<|f(z)|\le 1$ for all $z$ in $\D$. Consider the extremal problem of determining the following supremum:
\begin{equation}
 M_n = \sup \{|f^{(n)}(0)|/n!\,\colon\,f\in\cb_\ast\}\,, \quad n\ge 1\,.
 \label{eqn-extr-prob}
\end{equation}
A standard argument involving normal families shows that the supremum $M_n$ is attained for some function $f$. Any such function will be called an \textit{extremal function}. For every $n\ge 1$, the function
\begin{equation}
 f_n(z) = e^{(z^n-1)/(z^n+1)} = \frac1{e}+\frac2{e} z^n +\ldots
 \label{extr-fcn}
\end{equation}
shows that $M_n\ge 2/e$. In 1968 the late Polish mathematician Jan Krzy\.{z} \cite{Krz} suggested that $M_n = 2/e$ should hold for all $n\ge 1$, with equality only for the function $f_n$ given by \eqref{extr-fcn} and its rotations: $\a f(\g z)$, $|\g| = |\a| = 1$. This is known as the \textit{Krzy\.z conjecture}.
\par
Whenever $|\a|=|\g|=1$, it is plain that $f(z)= \sum_{j=0}^\infty a_j z^j$ is an extremal function if and only if $\g f(\a z)=\g \sum_{j=0}^\infty \a^j a_j z^j$ is extremal. Therefore
\begin{equation}
 M_n = \sup \{\mathrm{Re\,}a_n\,\colon\,f\in\cb_\ast\,, \ a_0>0\}\,, \quad n\ge 1\,.
 \label{eqn-extr-ref} 									
\end{equation}
Since the rotation in the argument involving $\a$ does not affect $a_0$, the coefficients of an extremal function for \eqref{eqn-extr-ref} must actually satisfy the condition $\mathrm{Re\,} a_n=|a_n|$ and hence $a_n>0$ (otherwise an appropriate rotation would yield a function in $\cb_\ast$ for which Re\,$a_n>M_n$). This observation will be used often.
%%%%%%%%%%%%%%%%%%%%%
\par\medskip
\textbf{Significance of the problem}. The importance of the question  stems from a number of its relationships with some fundamental results or concepts in geometric function theory. We review some of them here. For $f\in \cb_\ast$, write $f=e^g$, where Re\,$g<0$ in $\D$ and (after a suitable normalization) we may also assume that $g(0)$ is real and negative. Then the normalized function $g/g(0)$ belongs to the class $P$ of normalized functions with positive real part, hence its coefficients are bounded by $2$ in modulus by Carath\'eodory's lemma \cite[Chapter~2]{Du2}. Thus, the Krzy\.z conjecture can be thought of as an ``exponential analogue'' of the Carath\'eodory lemma.
\par
From the fact that every Taylor coefficient of $f$ can be computed recursively in terms of its previous coefficients and those of $g$, a further  relationship stems with the Faber polynomials and Grunsky's inequalities \cite[Chapter~5]{Du2}.
\par
The statement of the Krzy\.z conjecture also provides a curious subordination relation. The atomic singular inner function $S(z)= e^{(z+1)/(z-1)}$ is a universal covering map of the punctured disk $\D\setminus\{0\}$. A moments' thought reveals that every analytic function $f$ of the disk into the punctured disk is subordinated to $S$ in the sense that it can be written in the form $f=e^{(h+1)/(h-1)}$ for some analytic function $h$ from $\D$ into itself. (Let us point out here that one can also write $f=e^{(h-1)/(h+1)}$ and in this paper we will use whichever one of the two forms is more convenient for a specific purpose.) Subordination is a central topic in geometric function theory (see, for example, Chapter~6 of Duren's monograph \cite{Du2}). The basic principles in this theory state roughly that, whenever a function is subordinated to another then some of the initial Taylor coefficients will be smaller than those of the superordinate  function and some mean value of the coefficients should be smaller. The Krzy\.z conjecture states that $n$-th Taylor coefficients of any admissible $f$ cannot be larger than the corresponding coefficient of $S\circ \s_n$, where $\s_n (z)=z^n$.
\par
Some parallelism can also be observed between the Krzy\.z conjecture and the famous Bieberbach's conjecture (now de Branges' theorem) for the $n$-th coefficient of a univalent function in the class $S$; \textit{cf.\/} \cite{Du2}, \cite{dB}. Actually, in \cite{HSZ} a differential equation was devised which would govern the coefficients for the Krzy\.z problem in much the same way the celebrated Loewner's equation \cite{Du2} did for the Bieberbach's conjecture. A different approach along these lines was tried in \cite{R2}. This suggests that the relative similarity between the two problems may not be so superficial and may also nurture some hopes that, if the Krzy\.z conjecture had a ``simple'' proof then perhaps so would the de Branges theorem.
\par
The Krzy\.z  conjecture has attracted the attention of a number of mathematicians and has been mentioned in a number of relevant surveys; see Lewandowski and Szynal \cite{LS1} or B\'en\'eteau and D. Khavinson \cite{BK}, for example. Various master theses or doctoral dissertations have been, at least partially,  devoted to this problem, \textit{e.g.}, those of Ermers \cite{E} under the direction of A.C.M.~v.~Rooij and R.A.~Kortram, Romanova \cite{R1,R2} under the supervision of D.V.~Prokhorov, and Williams \cite{W} under R.~Barnard. Closely related problems had been considered earlier in Terpigoreva's thesis (\textit{cf.\/} \cite{Te}, for example) under the supervision of S.Ya.~Khavinson.
%%%%%%%%%%%%%%%%%%%%%
\par\smallskip
\textbf{Further motivation and brief history of the conjecture}. By the basic theory of Hardy spaces $H^p$ (see \cite{Du1} or \cite{G}), the coefficients of any function $f$ analytic in $\D$ and bounded by one in modulus must satisfy
$$
 \sum_{j=0}^\infty |a_j|^2 = \|f\|_2^2 \le \|f\|_\infty^2 = \sup_{z\in\D} |f(z)|^2 \le 1\,.
$$
Thus, if two coefficients of $f$ had the property $|a_k|$, $|a_m|\ge 2/e$, $k\neq m$, then we would have
$$
 1\ge |a_k|^2 + |a_m|^2 \ge 2 \(\frac{2}{e}\)^2 = \frac{8}{e^2} >1\,,
$$
which is absurd. Thus, no more than one coefficient of $f$ can possibly be bigger than $2/e$. This seems to speak in favor of the conjecture as one would naturally expect stronger statements to hold for functions in $\cb_\ast$.
\par
A well-known estimate shows that $|a_n|\le 1-|a_0|^2$ for all $n\ge 1$ whenever $f$ is analytic and bounded by one in $\D$; see \cite{AW} or \cite{W}. We wish to stress that for the class $\cb_\ast$  uniform bounds on $M_n$ strictly smaller than one are known. Horowitz \cite{Ho} showed that
$$
 M_n\le 1 - \frac1{3\pi}+ \frac4{\pi} \sin \frac1{12} = 0.99987\ldots \,, \quad n\in\N\,.
$$
This was later improved slightly to 0.9991\ldots by Ermers \cite{E}. Both values are obviously still far from the desired bound $2/e = 0.73575888 \ldots$ Some useful asymptotic bounds were obtained in \cite{PR}.
\par
As for the exact bounds, it was known as early as in 1934 that $M_1=2/e$. The first known record of this seems to appear in \cite{LFR}; see also \cite{SR} or \cite{AW}. However, so
far the Krzy\.z conjecture has only been proved for $n\le 5$. Krzy\.z \cite{Krz} and Reade, as well as MacGregor (unpublished), were the first to prove the statement for $n=2$. Hummel, Scheinberg, and Zalcman \cite{HSZ} later gave a new proof in this case and also solved the problem for  $n=3$. Tan \cite{Ta} and Brown \cite{Br2} proved the conjecture for $n=4$ and Samaris \cite{Sm} did it for $n=5$. As far as we know, the conjecture remains open for all other values of $n$.
\par
Some partial progress on the problem was obtained by a number of other authors. We mention the papers by Brown \cite{Br1} and Peretz \cite{Pe1, Pe2} who provided the proof under some additional assumptions on the coefficients of extremal functions. We also mention \cite{KS}, \cite{LS2}, \cite{PS}, and \cite{Sz}. Krushkal's unpublished preprint \cite{Kr}, in spite of some gaps found in it, contains a wealth of geometric and analytic ideas which may be useful for a further study of problems of this type.
%%%%%%%%%%%%%%%%%%%%%
\par\medskip
\textbf{Some known qualitative results}. In their influential paper \cite{HSZ}, Hummel, Scheinberg, and Zalcman obtained various relationships between the coefficients of an extremal function. They showed that the extremal functions are of the form
\begin{equation}
 f(z)= e^{\sum_{j=1}^N r_j \frac{\a_j z + 1}{\a_j z - 1}}\,, \quad 1\le N \le n\,, \quad r_j> 0\,, \quad |\a_j|=1, \ 1\le j\le N\,.
 \label{str-extr-fcn}
\end{equation}
The above structure is not too surprising in view of the general knowledge of extremal problems developed by S.Ya.~Khavinson and first published in Russian in the early 60's; see \cite{KhS} or \cite{BK}. Namely, as a limit case of the result from \cite{KhS} for functions in the unit ball of $H^p$ spaces as $p\to\infty$, it is possible to deduce \eqref{str-extr-fcn}. The authors of \cite{HSZ} gave their own proof of \eqref{str-extr-fcn} and mentioned two other possible proofs. Kortram \cite{Ko} later gave yet another proof. We also mention a result on the structure of extremal functions for more general coefficient problems that follows directly from an earlier work of Hummel \cite{Hu}, as was pointed out by Sakaguchi \cite[Theorem~C]{Sk}.
\par
Unfortunately, in spite of such a clear structure of extremal functions there are simply too many parameters to control here, so it is not at all immediate that the desired symmetry holds in \eqref{str-extr-fcn}:
\[
 N=n\,; \quad r_j=\frac1{n}\quad \mathrm{and} \quad \a_j = \a\,e^{2\pi j i/n}\,, \quad j=1,2,\ldots,n\,, \quad |\a|=1\,.
\]
Hence the problem remains open in spite of all the information available. However, it seems that most experts believe that the conjecture is true.
%%%%%%%%%%%%%%%%%%%%%
\par\medskip
\textbf{Contents and organization of the paper}. Our aim is to consider the problem from a different viewpoint. First, we will show that the Taylor coefficients of an extremal function for \eqref{eqn-extr-ref}, as well as the zeros of some polynomials associated with it in a natural way, must satisfy certain inequalities. Next, it turns out that whenever equality holds in any one of these inequalities, the conjecture is true. This is the main purpose of this paper. Therefore our main statement, Theorem~\ref{thm-equiv}, can be viewed as a reformulation of the conjecture in many ways.
\par
Our proofs are typically based on variational methods, similar to those employed in \cite{HSZ}, on the Riesz factorization for Hardy spaces, on the Fej\'er lemma on polynomials with positive real part on the unit circle, and on the Carath\'eodory lemma for analytic functions with positive real part in $\D$.
\par
The paper is organized as follows. We first review some known facts, several of them with new proofs, and collect other useful information in the section on preliminary results. In the final section we formulate explicitly the main result and give its detailed proof, together with various comments.
\par\medskip
\textsc{Acknowledgments}. {\small The authors would like to thank the referee for the most careful reading of the manuscript, some helpful suggestions, and for pointing out a mistake in the first draft of the paper. Thanks are also due to Alexandru Aleman, Roger Barnard, Catherine B\'en\'eteau, Mario Bonk, Peter L. Duren, John B. Garnett, Daniel Girela, H\aa kan Hedenmalm, Dmitry Khavinson, Yuri\u{\i} Lyubarski\u{\i}, Donald Marshall, Steffen Rohde, Peter Sarnak, Kristian Seip, Alexander Vasil'ev, and Lawrence Zalcman, either for their interest in the problem and encouragement or for some stimulating conversations on the subject or for useful information on the references.
\par
The first, third and fourth authors thankfully acknowledge partial
support from MINECO grant MTM2012-37436-C02-02, Spain. Starting in 2014 the first author was supported by Academy of Finland grant 268009. The second author was supported by NSERC, Canada. The third author was also supported by NSF grant DMS-0901524, by NSF CAREER, Award No. DMS-1056965, and Sloan Research Fellowship, USA. The fourth author was partially supported by the European ESF Network HCAA (``Harmonic and Complex Analysis and Its Applications'') during the period November 2009 - April 2012.
\par
Parts of this work were done at various research institutes:
\par
- in September-October of 2011 while the first, third and fourth authors participated in the special program ``Complex Analysis and Integrable Systems'' held at Institut Mittag-Leffler in Djursholm, Sweden,
\par
- in August of 2012 during a workshop held at the same institution and attended by the first and the fourth authors,
\par
- in March of 2012 while the fourth author attended the workshop ``Operator Related Function Theory'' at the Erwin Schr\"odinger Institute in Vienna, Austria,
\par
-  in the Spring of 2013 during the third author's stay at IPAM in Los Angeles for the special program ``Interactions Between Analysis and Geometry''.
\par
Part of the work was also done during various short visits by the third author to both Universidad Aut\'onoma de Madrid and MacMaster University and of the second author to Michigan State University during the period 2012-14. The authors would like to thank all these institutions for their hospitality and partial financial and technical support and for providing a stimulating environment for doing mathematical research.}
\par
%%%%%%%%%%%%%%%%%%%%%%%%%%%
\section{Various preliminary observations and results}
%%%%%%%%%%%%%%%%%%%%%%%%%%%
\par
\textbf{The simplest case}. We formulate the answer in the easiest case $n=1$ as a lemma and also present a very simple proof which seems different from the ones  published before, for example, from \cite{SR}. This is done not only for the sake of completeness but also because it will be needed later.
\par
%%%%%%%%%%%%%%%%%%%%%
%% Lemma.
%%%%%%%%%%%%%%%%%%%%%
\begin{lem}
 \label{lem-n=1}
If $f\in\cb_\ast$ then $|a_1|\le \frac2{e}$. Equality holds only for the
functions of the form
\begin{equation}
 f(z)=\g e^\frac{\a z+1}{\a z-1}\,, \qquad |\a|=|\g|=1\,.
\end{equation}
\end{lem}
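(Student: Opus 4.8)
The plan is to pass to the logarithm of $f$ and reduce everything to the classical Carath\'eodory estimate. Since $f\in\cb_\ast$ is non-vanishing on the simply connected domain $\D$, it has a single-valued analytic logarithm; write $f=e^{g}$, where $\mathrm{Re}\,g=\log|f|\le 0$. If $f$ is constant then $a_1=0$ and there is nothing to prove, so I may assume $f$ is non-constant, in which case $\mathrm{Re}\,g<0$ throughout $\D$ by the maximum modulus principle. Expanding $g(z)=c_0+c_1z+\ldots$ and comparing with $f=e^{c_0}(1+c_1z+\ldots)$ gives $a_0=e^{c_0}$ and $a_1=c_1e^{c_0}$, hence
\[
 |a_1|=|c_1|\,|a_0|\,,\qquad |a_0|=e^{\mathrm{Re}\,c_0}\,.
\]

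Next I would apply the Carath\'eodory lemma to the function $-g$, which has strictly positive real part. Writing $u=-\mathrm{Re}\,c_0=\log(1/|a_0|)>0$, the normalized function $(-g+i\,\mathrm{Im}\,c_0)/u$ takes the value $1$ at the origin and has positive real part, so it lies in the Carath\'eodory class and its first Taylor coefficient is bounded by $2$ in modulus; this yields $|c_1|\le 2u=2\log(1/|a_0|)$. Combining with the display above,
\[
 |a_1|\le 2\,|a_0|\,\log\frac1{|a_0|}\,.
\]
Setting $t=|a_0|\in(0,1)$ and maximizing the elementary function $\f(t)=-2t\log t$, whose derivative $-2\log t-2$ vanishes only at $t=1/e$, one finds $\f(t)\le\f(1/e)=2/e$. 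This proves $|a_1|\le 2/e$.

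The delicate part is the equality discussion, which forces \emph{two} separate equalities to hold simultaneously: the maximization must be sharp, giving $|a_0|=1/e$ (equivalently $u=1$), and the Carath\'eodory estimate must be sharp, giving $|c_1|=2u$. For the latter I would invoke the Herglotz representation of functions in the Carath\'eodory class: equality in $|c_1|\le 2u$ occurs precisely when the representing measure is a single unit point mass, so that $-g$ equals, up to an additive imaginary constant, one M\"obius function $u\cdot\frac{1+\a z}{1-\a z}$ with $|\a|=1$. With $u=1$ this identifies $g(z)=\frac{\a z+1}{\a z-1}+i\,\mathrm{Im}\,c_0$ and hence $f(z)=\g\,e^{(\a z+1)/(\a z-1)}$ with $\g=e^{i\,\mathrm{Im}\,c_0}$, $|\g|=|\a|=1$, exactly the asserted extremal form. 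I expect the main obstacle to be essentially bookkeeping: translating the sharpness condition for the Carath\'eodory class cleanly through the exponential and the normalizations so that the rotation parameters $\a$ and $\g$ emerge in precisely the stated shape, together with the (routine) verification that every such $f$ does attain the value $2/e$.
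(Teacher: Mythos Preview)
Your argument is correct and complete, including the equality analysis; for $n=1$ the sharpness in Carath\'eodory's lemma does indeed force the Herglotz measure to be a single point mass, which gives exactly the Möbius form you wrote down.

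The paper, however, takes a different (though closely related) route: instead of applying Carath\'eodory's lemma directly to $-g$, it passes one step further to the self-map $h$ of $\D$ with $g=(h+1)/(h-1)$, computes $f'(0)$ explicitly in terms of $h(0)$ and $h'(0)$, and then invokes the Schwarz--Pick lemma $|h'(0)|\le 1-|h(0)|^2$. The two estimates are of course equivalent under the Cayley transform, and both reduce to the same one-variable maximization of $x\mapsto xe^{-x}$. Where the approaches diverge more visibly is in the equality discussion: the paper argues geometrically that equality in Schwarz--Pick forces $h$ to be a disk automorphism and that the calculus equality places $h(0)$ on the horocycle $\{|w-\tfrac12|=\tfrac12\}$, and then reads off the form of $g$; your Herglotz argument bypasses this geometry entirely. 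Your version is arguably more direct here and, incidentally, is exactly the mechanism the paper itself uses later (applying Carath\'eodory's lemma to $g/b_0$) in the proof of the main theorem, so it meshes well with the rest of the paper.
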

%%%%%%%%%%%%%%%%%%%%%
\par
%%%%%%%%%%%%%%%%%%%%%
\par\smallskip
%%%%%%%%%%%%%%%%%%%%%
\begin{rk} \label{rk-n=1}
Note that, under the normalization \eqref{eqn-extr-ref}, it is easy to check that equality holds in Lemma~\ref{lem-n=1} if and only if $f(z)=e^{\frac{z-1}{z+1}}$ (when $-\a=1=\g$).
\end{rk}
%%%%%%%%%%%%%%%%%%%%%
\begin{proof}
If $f\in \cb_\ast$ then we can write $f=e^g$ where $g$ is a function
analytic in $\D$ and with negative real part, hence $g=(h+1)/(h-1)$,
for some $h$ with $\|h\|_\infty\le 1$. A direct computation shows that
\[
 f^\prime = - \frac{2 h^\prime}{(h-1)^2} e^{\dfrac{h+1}{h-1}}\,,
\]
hence by the Schwarz-Pick lemma
\[
 |f^\prime(0)| = \frac{2 |h^\prime (0)|}{|1-h(0)|^2}\,
 e^{\mathrm{Re\,} \frac{h(0)+1}{h(0)-1}} \le 2 \frac{1-|h(0)|^2}{|1-h(0)|^2}\,
 e^{-\dfrac{1-|h(0)|^2}{|1-h(0)|^2}} \le \frac2{e}
\]
since elementary calculus shows that the function $u(x)=2x\,e^{-x}$
considered in $[0,+\infty)$ attains its maximum at the point $x=1$.
\par
The case of equality requires some analysis. In order for equality to hold, we must have equality in the Schwarz-Pick lemma so $h$ has to be a disk automorphism. Also, $h(0)$ must belong to the set of all $w$ such that $1-|w|^2=|1-w|^2$, which is the horocycle $\{z\,\colon\, |z-\frac12|= \frac12\}$. Now note that the linear fractional (M\"obius) map $w\mapsto (w+1)/(w-1)$ maps this horocycle onto the vertical line Re\,$z=-1$. In particular, we have
\[
 \frac{h(0)+1}{h(0)-1}=-1+i c\,, \quad c\in\R\,.
\]
But the linear fractional map $(h+1)/(h-1) - i c$ maps the unit disk conformally onto the left half-plane and the origin to the point $-1$. It follows that
\[
 g(z)=\frac{h(z)+1}{h(z)-1}=i c +\frac{\a z+1}{\a z-1}\,, \quad |\a|= 1\,.
\]
The desired structure of $f$ is observed immediately.
\end{proof}
%%%%%%%%%%%%%%%%%%%
\textbf{Some useful recurrence relations}. We continue with the following simple computation which is, for example, used to derive Grunsky's inequalities \cite[p.~143]{Du2}.
\par
%%%%%%%%%%%%%%%%%%%%%
%% Lemma - exp.
%%%%%%%%%%%%%%%%%%%%%
\begin{lem}
 \label{lem-exp}
If $f$ and $g$ are analytic in $\D$ and
$$
 f=e^g\,, \quad	 f(z)= \sum_{j=0}^\infty a_j z^j\,, \quad g(z)= \sum_{j=0}^\infty b_j z^j\,,
$$
then $a_0=e^{b_0}$ and
\begin{equation}
 a_n = \sum_{k=0}^{n-1} \frac{n-k}{n}\,a_k b_{n-k} = \sum_{j=1}^{n} \frac{j}{n} a_{n-j} b_j\,, \ n\ge 1\,.
 \label{rec-exp}
\end{equation}
\end{lem}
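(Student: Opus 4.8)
The plan is to reduce the coefficient identity to the logarithmic-derivative relation $f' = g'f$, which follows at once from differentiating $f = e^g$. Since $f$ and $g$ are analytic in $\D$, so is $g'$, and both $f'$ and $g'f$ have convergent Taylor expansions on $\D$; by uniqueness of Taylor coefficients, the whole statement then becomes a matter of comparing the coefficients of like powers of $z$ on the two sides of $f' = g'f$.

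First I would dispose of the constant term by evaluating $f = e^g$ at the origin: this gives $a_0 = f(0) = e^{g(0)} = e^{b_0}$. Next I would write $f'(z) = \sum_{n=1}^\infty n a_n z^{n-1}$ and $g'(z) = \sum_{j=1}^\infty j b_j z^{j-1}$, and expand the product $g'(z) f(z) = \left(\sum_{j\ge 1} j b_j z^{j-1}\right)\left(\sum_{k\ge 0} a_k z^k\right)$. Collecting the coefficient of $z^{n-1}$ forces $j-1+k = n-1$, i.e.\ $k = n-j$ with $1 \le j \le n$, so that coefficient equals $\sum_{j=1}^n j\, b_j\, a_{n-j}$. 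Matching this against the coefficient $n a_n$ of $z^{n-1}$ in $f'$ yields $n a_n = \sum_{j=1}^n j\, b_j\, a_{n-j}$, which is the second form in \eqref{rec-exp} after dividing through by $n$.

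Finally, the first form in \eqref{rec-exp} is obtained purely by reindexing: the substitution $k = n - j$ transforms $\sum_{j=1}^n \frac{j}{n} a_{n-j} b_j$ into $\sum_{k=0}^{n-1} \frac{n-k}{n} a_k b_{n-k}$, the ranges $1 \le j \le n$ and $0 \le k \le n-1$ corresponding exactly. The only point demanding care is the bookkeeping of the summation indices after the power shift in the product $g'f$; there is no genuine analytic obstacle here, as the identity $f' = g'f$ together with uniqueness of Taylor coefficients does all the work.
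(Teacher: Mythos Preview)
Your proof is correct and follows essentially the same approach as the paper: both start from $f' = g' f$ and extract the coefficient identity from it. The only cosmetic difference is that the paper differentiates $f' = f g'$ a further $n-1$ times via the Leibniz rule and evaluates at the origin, whereas you form the Cauchy product of the power series directly; these two computations are equivalent term by term.
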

%%%%%%%%%%%%%%%%%%%%%
\begin{proof}
Differentiation of $f=e^g$ yields $f^\prime = f g^\prime$. Upon
differentiating $n-1$ more times and applying the Leibniz formula we
get
$$
 f^{(n)} = \sum_{k=0}^{n-1} \bin{n-1}{k} f^{(k)} g^{(n-k)} \,.
$$
After evaluating both sides at the origin and dividing both sides by
$n!$, we get the desired formula.
\end{proof}
%%%%%%%%%%%%%%%%%%%%%
From the above lemma it easily follows that, for example,
\begin{eqnarray*}
 a_1 &=& a_0 b_1\,, \quad a_2 = a_0 \(\frac{b_1^2}{2} + b_2\)\,, \quad a_3 = a_0 \(\frac{b_1^3}{6} + b_1 b_2 + b_3\) \,,
\\
 a_4 &=& a_0 \(\frac{b_1^4}{24} + \frac{b_1^2 b_2}{2} + b_1 b_3 + \frac{b_2^2}{2} + b_4\) \,, \ldots
\end{eqnarray*}
This is easily generalized to obtain the following structural formula which essentially reduces to the well-known Fa\`a di Bruno formula \cite{J} on differentiation of composite functions (for functions of exponential type). We could even be more specific about the values of some of the coefficients but this will not be needed in the paper.
%%%%%%%%%%%%%%%%%%%%%
\par\smallskip
%%%%%%%%%%%%%%%%%%%%%
%% Proposition.
%%%%%%%%%%%%%%%%%%%%%
\begin{prop}
 \label{prop-polyn-a}
Let $f$ and $g$ and their coefficients be as in Lemma~\ref{lem-exp}. Then
$$
 a_n = a_0\,P_n (b_1,b_2,\ldots,b_n)\,,
$$
where, for each $n\ge 1$, $P_n$ is a polynomial of the form
$$
 P_n (b_1,b_2,\ldots,b_n) = \displaystyle \sum_{\substack{1\le m\le n, \\ \sum_{j=1}^m i_j(n) = n}} c_{i_1(n),i_2(n),\ldots,i_m(n)} b_{i_1(n)} b_{i_2(n)}\ldots b_{i_m(n)} \,,
$$
where all $i_j(n) \in\N$ and the coefficients $c_{i_1(n),i_2(n),\ldots,i_m(n)}$ are all strictly positive. In particular, whenever $1\le m\le n$ and $\sum_{j=1}^m i_j(n) = n$, the term containing the product $b_{i_1(n)} b_{i_2(n)}\ldots b_{i_m(n)}$ actually appears in the expression for $P_n$ with a non-zero coefficient in front of it.
\end{prop}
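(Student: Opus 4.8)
The plan is to argue by induction on $n$, using the second form of the recurrence \eqref{rec-exp} as the engine and adopting the convention $P_0 = 1$ (consistent with $a_0 = a_0\cdot 1$). The base case $n=1$ is immediate, since $a_1 = a_0 b_1$ forces $P_1(b_1) = b_1$, which has exactly the asserted shape. For the inductive step, assume the claim for all indices below $n$. Substituting $a_{n-j} = a_0\,P_{n-j}(b_1,\ldots,b_{n-j})$ into the recurrence gives
\[
 a_n = \sum_{j=1}^{n} \frac{j}{n}\,a_{n-j}\, b_j = a_0 \sum_{j=1}^{n} \frac{j}{n}\,P_{n-j}(b_1,\ldots,b_{n-j})\, b_j\,,
\]
so it is natural to \emph{define} $P_n := \sum_{j=1}^{n} \frac{j}{n}\, P_{n-j}\, b_j$ and then verify that it has the three required properties.

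Two of these verifications are routine. For the homogeneity of the index-weight, I would observe that by the inductive hypothesis every monomial of $P_{n-j}$ carries total index $n-j$, and multiplication by $b_j$ raises this to $n$; hence every monomial of $P_n$ is of the form $b_{i_1(n)}\cdots b_{i_m(n)}$ with $1\le m\le n$ and $\sum_{k=1}^m i_k(n) = n$. For the positivity of the coefficients, note that $j/n>0$ and, by induction, all coefficients of $P_{n-j}$ are positive, so each summand $\frac{j}{n}P_{n-j}b_j$ has strictly positive coefficients; summing over $j$ only accumulates positive contributions, and in particular no cancellation can occur.

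The one point that requires a genuine idea is the final assertion, that \emph{every} admissible monomial actually appears. Here I would exploit precisely the absence of cancellation just noted: it suffices to exhibit a single positive contribution to each prescribed monomial. Given $i_1,\ldots,i_m\in\N$ with $\sum_{k=1}^m i_k = n$, single out the last index $i_m$; the remaining indices sum to $n-i_m$, so by the inductive hypothesis $b_{i_1}\cdots b_{i_{m-1}}$ occurs in $P_{n-i_m}$ with positive coefficient (when $m=1$ this reads off the constant $1$ in $P_0$). The term $j=i_m$ in the definition of $P_n$ then contributes $\frac{i_m}{n}P_{n-i_m}b_{i_m}$, which contains $b_{i_1}\cdots b_{i_m}$ with a positive coefficient, and since nothing cancels this monomial survives in $P_n$. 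This closes the induction.

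Thus the main obstacle is the completeness claim, and the observation that disarms it is that the recurrence is assembled entirely from positivity-preserving operations, so one need only produce one witness per monomial rather than track exact coefficient values. (Identifying $P_n$ with the complete Bell polynomial via the Fa\`a di Bruno formula would furnish closed forms for the $c_{i_1(n),\ldots,i_m(n)}$, but this is not needed for the stated conclusion.)
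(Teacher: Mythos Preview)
Your proof is correct and follows essentially the same approach as the paper's: both argue by induction, feed the inductive hypothesis into the recurrence \eqref{rec-exp}, and note that positivity of all intermediate coefficients precludes cancellation, so each admissible monomial survives. The only cosmetic difference is that you use the form $\sum_{j=1}^{n}\frac{j}{n}a_{n-j}b_j$ while the paper uses the equivalent $\sum_{k=0}^{n-1}\frac{n-k}{n}a_k b_{n-k}$, and you spell out the ``completeness'' step a bit more explicitly than the paper does.
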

%%%%%%%%%%%%%%%%%%%%%
\begin{proof}
The statement follows easily by induction. It is obviously true for $n=1$ as $P_1(b_1)=b_1$. Now let $n\ge 2$ and suppose that the claim is true for every $k$ with $1\le k<n$; that is:
$$
 P_k (b_1,b_2,\ldots,b_k) = \displaystyle \sum_{\substack{1\le m_k \le k, \\ \sum_{j=1}^{m_k} i_j(k) = k}} c_{i_1(k),i_2(k),\ldots,i_{m_k}(k)} b_{i_1(k)} b_{i_2(k)}\ldots b_{i_{m_k(k)}} \,,
$$
whenever $1\le k<n$. By \eqref{rec-exp} we have
\begin{eqnarray*}
 \frac{a_n}{a_0} &=& \sum_{k=0}^{n-1} \frac{n-k}{n} \frac{a_k}{a_0} b_{n-k} = b_n + \sum_{k=1}^{n-1} \frac{n-k}{n} P_k (b_1,b_2,\ldots,b_k)\,b_{n-k}
\\
 &=& b_n + \sum_{k=1}^{n-1} \frac{n-k}{n} \displaystyle \sum_{\substack{1\le m_k \le k, \\ \sum_{j=1}^{m_k} i_j(k) = k}}
 c_{i_1(k),i_2(k),\ldots,i_{m_k}(k)} b_{i_1(k)} b_{i_2(k)}\ldots b_{i_{m_k(k)}}  b_{n-k}
\\
 &=& b_n + \displaystyle \sum_{\substack{2\le m\le n, \\ \sum_{j=1}^m i_j(n) = n}} c_{i_1(n),i_2(n),\ldots,i_m(n)} \, b_{i_1(n)} b_{i_2(n)}\ldots b_{i_m(n)} \,,
\end{eqnarray*}
which completes the proof of the inductive step. The last identity in the above string of equalities follows from the obvious fact that a positive integer $n$ can be written as a sum of at least two positive integers:
$$
 n= i_1(n)+i_2(n)+\ldots+i_m(n)\,, \quad m\ge 2\,,
$$
if and only if one of the numbers $i_j(n)=n-k$ with $0\le k<n$ and the sum of the remaining ones is $k$. Note also that, when collecting terms with the same product $b_{i_1(n)} b_{i_2(n)}\ldots b_{i_m(n)}$ (which may appear in several summands in the last line in the display above) no cancelation can occur because all the coefficients $\frac{n-k}{n}\,c_{i_1(k),i_2(k),\ldots,i_{m_k}(k)}$ are positive.
\end{proof}
%%%%%%%%%%%%%%%%%%%%%
\par\smallskip
For $n\ge 1$ and considering $a_0$ as a constant, \eqref{rec-exp} allows us also to express the coefficients $b_n$ as a polynomial of $a_1$,\ldots,$a_n$; for example:
\begin{eqnarray*}
 b_1 &=& \frac{a_1}{a_0}\,, \qquad b_2 = \frac1{a_0} \(a_2 - \frac12 a_1 b_1 \) = \frac1{a_0} a_2 - \frac1{2a_0^2} a_1^2\,,
\\
 b_3 &=& \frac1{a_0} \(a_3 - \frac13 a_2 b_1 - \frac23 a_1 b_2\) =
 \frac1{a_0} a_3 - \frac1{a_0^2} a_1 a_2 + \frac1{3 a_0^3} a_1^3 \,, \ldots
\end{eqnarray*}
The difference with respect to the previous lemma is that some coefficients are no longer positive. However, all possible terms $a_{i_1} a_{i_2}\ldots a_{i_m}$ with $i_1+i_2+\ldots+i_m=n$ are present in the formula for each $b_n$ and the signs of the coefficients are easy to control: they are positive in front of a product of an odd number of terms and negative in front of a product of an even number of terms $a_i$. It turns out that in computing $b_n$ when we sum up similar terms, coming from different summands but containing the same product, no cancelation of the coefficients in front of two similar terms occurs because these coefficients will have the same sign. This is easily checked when computing $b_3$ and $b_4$ and can be proved formally without difficulty. We formulate the precise statement as follows.
%%%%%%%%%%%%%%%%%%%%%
%% Proposition.
%%%%%%%%%%%%%%%%%%%%%
\begin{prop}
 \label{prop-polyn-b}
Let $f$ and $g$ and their coefficients be as in Lemma~\ref{lem-exp} and let $a_0>0$ be fixed. Then
$$
 b_n = Q_n (a_1,a_2,\ldots,a_n)\,,
$$
where the polynomial $Q_n$ has the form
\begin{eqnarray*}
 && Q_n (a_1,a_2,\ldots,a_n) =
\\
 && \displaystyle \sum_{\substack{1\le m\le n, \\ \sum_{j=1}^m i_j(n)} = n} (-1)^{m+1} c_{i_1(n),i_2(n),\ldots,i_m(n)}(a_0)\,a_{i_1(n)} a_{i_2(n)}\ldots a_{i_m(n)}\,,
\end{eqnarray*}
where every $c_{i_1(n),i_2(n),\ldots,i_m(n)} (a_0)$ is positive. In particular, whenever $n=\sum_{j=1}^m i_j(n)$, the term containing the product $a_{i_1(n)} a_{i_2(n)}\ldots a_{i_m(n)}$ effectively appears in the expression for $Q_n$ above (with a non-zero coefficient in front of it).
\end{prop}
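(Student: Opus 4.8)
The plan is to mirror the inductive proof of Proposition~\ref{prop-polyn-a}, the only genuinely new ingredient being the bookkeeping of signs. The starting point is to isolate $b_n$ in the recurrence \eqref{rec-exp}. Writing the $j=n$ term separately in the second form of \eqref{rec-exp} gives $a_n = a_0 b_n + \sum_{j=1}^{n-1} \frac{j}{n}\,a_{n-j}\,b_j$, so that
$$
 b_n = \frac1{a_0}\,a_n - \frac1{a_0}\sum_{j=1}^{n-1} \frac{j}{n}\,a_{n-j}\,b_j\,.
$$
This expresses $b_n$ in terms of $a_n$ and of the lower-order coefficients $b_1,\ldots,b_{n-1}$, each of which is already a polynomial in $a_1,\ldots,a_{n-1}$ by the inductive hypothesis.

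First I would settle the base case $n=1$, where $b_1=a_1/a_0$ fits the claimed form with a single factor ($m=1$), the correct sign $(-1)^{1+1}=+1$, and the positive coefficient $1/a_0$. For the inductive step I would assume the statement for all $k<n$ and substitute $b_j=Q_j(a_1,\ldots,a_j)$ into the displayed recurrence. The leading term $a_n/a_0$ supplies the unique single-factor term, again with $m=1$ and positive coefficient $1/a_0$. Each remaining summand is a product of $a_{n-j}$ with a generic term of $Q_j$: if that term has $m'$ factors $a_{i_1}\cdots a_{i_{m'}}$ with $\sum_l i_l = j$, then after multiplication by $a_{n-j}$ one obtains a product of $m=m'+1$ factors whose indices sum to $(n-j)+j=n$, exactly as required.

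The heart of the argument is the sign computation, and it is here that I expect the only real subtlety. By induction the term of $Q_j$ carries the sign $(-1)^{m'+1}=(-1)^{m}$; multiplying by the overall factor $-1/a_0$ in front of the sum turns this into $(-1)^{m+1}$, the sign prescribed by the statement, while the magnitude of the coefficient becomes $\frac{j}{n a_0}$ times a positive quantity, hence remains strictly positive. The crucial consequence is that the sign of every contribution to a given monomial $a_{i_1}\cdots a_{i_m}$ depends only on the number of factors $m$, and not on which summand it came from. Therefore, exactly as in Proposition~\ref{prop-polyn-a}, when like terms are collected no cancelation can occur: the coefficients add up in absolute value and keep the common sign $(-1)^{m+1}$, so each resulting $c_{i_1(n),\ldots,i_m(n)}(a_0)$ is positive.

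It remains to verify that every admissible monomial actually appears, and this follows from the same peeling argument as before. Given any product $a_{i_1}\cdots a_{i_m}$ with $m\ge 2$ and $\sum_l i_l=n$, I would single out one factor, say $a_{i_m}=a_{n-j}$ with $j=i_1+\cdots+i_{m-1}<n$; the remaining product $a_{i_1}\cdots a_{i_{m-1}}$ has indices summing to $j$ and, by the inductive hypothesis, occurs in $Q_j$ with a nonzero coefficient. Its product with $a_{n-j}$ then contributes the full monomial to the corresponding summand, and since no cancelation is possible this monomial survives in $Q_n$. This completes the inductive step and hence the proof.
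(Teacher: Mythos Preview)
Your proof is correct and follows essentially the same inductive approach as the paper: isolate $b_n$ from the recurrence \eqref{rec-exp}, substitute the inductive hypothesis for $b_j$, and observe that each multiplication by the extra factor $a_{n-j}$ together with the global $-1/a_0$ flips the sign so that it depends only on the number of factors, whence no cancelation occurs. If anything, your treatment of the sign bookkeeping and of the ``every monomial appears'' claim is more explicit than the paper's brief sketch.
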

%%%%%%%%%%%%%%%%%%%%%
\begin{proof}
A proof can again be given by induction, similar to Proposition~\ref{prop-polyn-a} but here we should also explain the sign changes $(-1)^{m+1}$.
\par
A simple inspection of the formulas preceding this result shows that the statement is obviously true for $n=1$ and $n=2$. Suppose $n>1$ and the claim is true for every $j$ with $1\le j<n$. Again by \eqref{rec-exp}, we have
\begin{eqnarray*}
 b_n &=& \frac{1}{a_0} \( a_n - \sum_{j=1}^{n-1} \frac{j}{n} a_{n-j} b_j \)
\\
 &=& \frac{1}{a_0} \( a_n - \sum_{j=1}^{n-1} \frac{j}{n} Q_{j} (a_1,a_2,\ldots,a_j)\,a_{n-j} \)
\\
\end{eqnarray*}
and everything will be similar to the previous result with one difference: whenever a new product $a_{i_1(j)} a_{i_2(j)}\ldots a_{i_m(j)} a_{n-j}$ is created from the earlier term $a_{i_1(j)} a_{i_2(j)}\ldots a_{i_m(j)}$ (that is, whenever there is an extra factor), a sign change occurs simultaneously. This explains the appearance of the factor $(-1)^{m+1}$ in front of all terms containing similar products.
\end{proof}
%%%%%%%%%%%%%%%%%%%%%
\par\smallskip
%%%%%%%%%%%%%%%%%%%%%
\par\medskip
\textbf{The structure of extremal functions}.
We have already observed that all functions in the class $\cb_\ast$ are of the form $f=e^{(h+1)/(h-1)}$ for some analytic self-map $h$ of $\D$. Now recall Carath\'eodory's theorem (see \cite[Theorem~2.1]{G} or \cite[Theorem~IV.24]{Ts}) which says that, for any given $n\ge 1$ and an analytic function $h$ whose modulus is bounded by one in the disk, with $h(z)= \sum_{k=0}^\infty a_k z^k$, one can find a finite Blaschke product $B$ of degree at most $n+1$ with
$$
 B(z)=\sum_{k=0}^n a_k z^k + \sum_{k=n+1}^\infty c_k z^k\,.
$$
By combining Lemma~\ref{lem-exp} and this result, it is actually not difficult to see that \textit{there exists\/} an extremal function which is of the form
\begin{equation}
 f = e^{(B+1)/(B-1)}
 \label{extr-fcn-bl-form}
\end{equation}
where $B$ is a Blaschke product of degree at most $n+1$.
\par
It takes a further step to deduce that actually \textit{every\/} extremal function is as in \eqref{extr-fcn-bl-form} but with the degree of $B$ at most $n$ to obtain the theorem on the structure of extremal functions as given in \cite{HSZ}. We have already mentioned several references on this starting with S.Ya. Khavinson's work. Let us also mention that it is possible to combine the open mapping theorem for non-constant analytic functions and the Toeplitz-Carath\'eodory theorem on the coefficients of analytic functions from the disk into the right half-plane \cite[Theorem~IV.22]{Ts} to give yet another proof of this statement, as was shown to us by Donald Marshall. Here we only recall again the exact statement of this result from \cite{HSZ}:
\smallskip\par\noindent
%%%%%%%%%%%%%%%%%%%%%
{\sc Theorem}. \textit{Every extremal function for the Krzy\.z problem \eqref{eqn-extr-ref} is of the form
\[
 f(z)= e^{\sum_{j=1}^N r_j \frac{\a_j z + 1}{\a_j z - 1}}\,,
\]
for certain values of the parameters considered which satisfy}
\[
 1\le N \le n\,; \quad r_j> 0 \quad and \quad |\a_j|=1, \ 1\le j\le N\,.
\]
(Without loss of generality, we may assume that $\a_j\neq \a_k$ whenever $j\neq k$.)
%%%%%%%%%%%%%%%%%%%%%
\par\smallskip
It should be noted that every function as above is actually of the form \eqref{extr-fcn-bl-form}. Indeed, note that all the fractions $\frac{\a_j z + 1}{\a_j z - 1}$ map the unit disk to the left half-plane and the unit circle to the imaginary axis and since all $r_j>0$ the same is true of the exponent
$$
 g(z) = \sum_{j=1}^N r_j \frac{\a_j z + 1}{\a_j z - 1}\,.
$$
Invoking again the conformal map of the disk onto the left half-plane, we see that $B=(g+1)/(g-1)$ maps the unit disk to itself, the unit circle into itself and has $N$ zeros in the disk counting the multiplicities (since $g$ takes on the value $-1$ exactly $N$ times, which is easily seen by inspecting the resulting polynomial equation).  It follows that $B$ is a finite Blaschke product of degree $N$ by the well-known characterization of such functions \cite[p.~6]{G}. Solving for $g$, we see that all extremal functions are actually of the form
\eqref{extr-fcn-bl-form} where $B$ is a finite Blaschke product of degree $N\le n$.
%%%%%%%%%%%%%%%%%%%%
\par\medskip
\textbf{On the Taylor coefficients of an extremal function}. We now recall some important facts. Parts (a) and (b) of the statement below may not have been recorded explicitly in the literature while (c) and (d) were deduced on p.~173 of \cite{HSZ}. For the sake of completeness, we include a simple proof of both facts by an elementary variational method, \textit{i.e.}, using differentiation with respect to a parameter.
\par\smallskip
%%%%%%%%%%%%%%%%%%%%%
%% Proposition.
%%%%%%%%%%%%%%%%%%%%%
\begin{prop}
 \label{prop-var}
Let $n>1$ and let $f(z)=\sum_{j=0}^\infty a_j z^j$ be an extremal function for \eqref{eqn-extr-ref}. (Recall that then, as observed earlier, $a_n$ is real and $a_n>0$.) Then
\begin{itemize}
\item[(a)]
If $u$ is an arbitrary analytic function with negative real part in $\D$ and $u(z)=\sum_{n=0}^\infty c_n z^n$, then
$$
 \mathrm{Re\,} \{a_n c_0 + a_{n-1} c_1 + \ldots + a_0 c_n\} \le 0\,.
$$
\item[(b)]
When $f=e^g$ and the coefficients of $g$ are denoted by $b_j$, we have
\begin{equation}
 \mathrm{Re\,} \{a_n b_0 + a_{n-1} b_1 + \ldots + a_0 b_n\} = 0\,.
 \label{cond-coeff}
\end{equation}
\item[(c)]
$a_n\ge 2 a_0$.
\item[(d)]
The polynomial $P(z) = a_n + 2 a_{n-1} z + \ldots 2 a_1 z^{n-1} + 2 a_0 z^n$ satisfies Re\,$P(z)\ge 0$ whenever $|z|\le 1$.
\item[(e)]
Moreover, if the extremal function (which is a singular inner function with finitely many atoms) has $N$ point masses: at $\a_1$, $\a_2$,\ldots,$\a_N$, $1\le N\le n$, then actually Re\,$P(\a_k)= 0$ for each $k$ with $1\le k\le N$.
\end{itemize}
\end{prop}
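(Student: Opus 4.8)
The plan is to run the classical variational method. Since the extremal $f$ is non-vanishing on $\D$ we may write $f=e^g$ with $g$ analytic and $\mathrm{Re}\,g<0$, and we perturb the exponent; throughout I write $[z^n]h$ for the $n$-th Taylor coefficient of $h$. For an admissible direction $u=\sum_k c_k z^k$ the function $f_t=e^{g+tu}=f\,e^{tu}$ satisfies $|f_t|=e^{\mathrm{Re}(g+tu)}$, so it lies in $\cb_\ast$ as long as $\mathrm{Re}(g+tu)<0$; since $M_n$ is the supremum over $\cb_\ast$ of the modulus of the $n$-th coefficient and $f$ is extremal, we always have $|[z^n]f_t|\le a_n$, with equality at $t=0$. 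Expanding $f_t=f(1+tu+O(t^2))$ gives $[z^n]f_t=a_n+t\sum_{k=0}^n a_{n-k}c_k+O(t^2)$, so, because $a_n>0$ is real, $|[z^n]f_t|^2=a_n^2+2a_n t\,\mathrm{Re}\{\sum_{k=0}^n a_{n-k}c_k\}+O(t^2)$. For (a) I take any $u$ with $\mathrm{Re}\,u<0$; then $t=\varepsilon>0$ is admissible and letting $\varepsilon\downarrow0$ in $|[z^n]f_\varepsilon|^2\le a_n^2$ forces $\mathrm{Re}\{\sum_{k=0}^n a_{n-k}c_k\}\le 0$. For (b) I take $u=g$ (so $c_k=b_k$): now $(1+t)g$ has negative real part for every $t>-1$, so $t=0$ is an \emph{interior} maximum of $|[z^n]f_t|^2$, the first-order coefficient must vanish, and one gets the equality $\mathrm{Re}\{\sum_{k=0}^n a_{n-k}b_k\}=0$.

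Parts (d) and (e) are the same computation evaluated in well-chosen directions. For (d) I insert into (a) the function $u_\zeta(z)=-\frac{1+\bar\zeta z}{1-\bar\zeta z}$, analytic with negative real part for $|\zeta|<1$, whose coefficients are $c_0=-1$ and $c_k=-2\bar\zeta^{k}$ $(k\ge1)$. A direct count gives $\sum_{k=0}^n a_{n-k}c_k=-P(\bar\zeta)$, so (a) yields $\mathrm{Re}\,P(\bar\zeta)\ge0$ for all $|\zeta|<1$; as $\zeta$ ranges over the disk this is $\mathrm{Re}\,P\ge0$ on $\D$, and then on $\overline{\D}$ by continuity of the harmonic function $\mathrm{Re}\,P$. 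For (e) I invoke the structure theorem $g=\sum_j r_j\frac{\a_j z+1}{\a_j z-1}$ and perturb the mass at a single atom: the direction $u(z)=\frac{\a_k z+1}{\a_k z-1}$ (again $c_0=-1$, $c_m=-2\a_k^{m}$) keeps $g+tu=\sum_{j\ne k} r_j(\cdots)+(r_k+t)(\cdots)$ of negative real part for every $t>-r_k$, hence for $t$ in an open interval around $0$. So, exactly as in (b), the first-order term vanishes and $\mathrm{Re}\,P(\a_k)=0$.

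Part (c) is the delicate one, and the whole point is the factor $2$: the plain Carath\'eodory bound applied to $P/a_n$ only gives $2a_0\le 2a_n$. From (d), $P(z)=a_n+2a_{n-1}z+\cdots+2a_0z^n$ is a polynomial of degree exactly $n$ whose real part is nonnegative on $\overline{\D}$, with real constant term $a_n$ and real leading coefficient $2a_0$. The sharp input is the Fej\'er lemma for such polynomials, namely that the leading coefficient is dominated by the constant term. I would obtain it from the Fej\'er--Riesz factorization: the nonnegative trigonometric polynomial $2\,\mathrm{Re}\,P(e^{i\theta})=|Q(e^{i\theta})|^2$ with $Q(z)=\sum_{j=0}^n q_j z^j$ of degree $n$; matching Fourier coefficients gives $2a_0=q_n\bar q_0$ and $2a_n=\sum_{j=0}^n|q_j|^2$, whence by AM--GM $2a_0=|q_n|\,|q_0|\le\tfrac12\sum_j|q_j|^2=a_n$, that is $a_n\ge 2a_0$.

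The main obstacle is precisely securing this factor of $2$ in (c): one must use the leading-coefficient form of the Fej\'er lemma rather than the lossy Carath\'eodory estimate, and it is worth noting that equality here forces $Q(z)=q_0+q_nz^n$ with $|q_0|=|q_n|$, which is reassuringly the profile of the conjectured extremizer. The remaining points are routine: checking $f_t\in\cb_\ast$ (the exponent keeps negative real part), the smooth dependence of $[z^n]f_t$ on $t$ that lets one read off the first-order term, and, in (e), verifying that decreasing the atomic mass genuinely gives a \emph{two-sided} admissible variation, so that one obtains equality rather than merely the inequality of (a) and (d).
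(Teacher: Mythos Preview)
Your proof is correct, and for parts (a), (b), (d), (e) it is essentially identical to the paper's: the same perturbation $f_t=f e^{tu}$, the same one-sided vs.\ two-sided admissibility argument, and the same choices of direction $u$ (the paper writes $u_\lambda(z)=\frac{\lambda z+1}{\lambda z-1}$ instead of your $u_\zeta$, but these coincide up to relabelling).

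The only genuine difference is in (c). You deduce $a_n\ge 2a_0$ from (d) via the Fej\'er--Riesz factorization $2\,\mathrm{Re}\,P(e^{i\theta})=|Q(e^{i\theta})|^2$ and AM--GM on the extreme Fourier coefficients. The paper instead gets (c) in one line directly from (a) by choosing the test direction $u(z)=\dfrac{z^n-1}{z^n+1}=-1+2z^n-2z^{2n}+\cdots$, whose only nonzero coefficients among $c_0,\dots,c_n$ are $c_0=-1$ and $c_n=2$; plugging into (a) gives $\mathrm{Re}\{-a_n+2a_0\}\le 0$, i.e.\ $a_n\ge 2a_0$. So the paper's argument is shorter and does not rely on having proved (d) first, while your route has the small dividend of reading off the equality case $Q(z)=q_0+q_n z^n$ with $|q_0|=|q_n|$.
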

%%%%%%%%%%%%%%%%%%%%%
\begin{proof}
(a) Let $u$ be analytic in $\D$ with $\mathrm{Re\,} u < 0$ and
let $\e > 0$. Then the function $f e^{\e u} \in \cb_\ast$ and
is, hence, in contention with $f$. But
$$
 f e^{\e u} = f (1+\e u+O(\e^2)) = f + \e f u + O(\e^2) \,.
$$
By comparing the $n$-th coefficients, canceling, dividing out by $\e$ and letting $\e \to 0$, we see that the real part of the $n$-th coefficient of $f u$ is $\le 0$, which proves (a).
\par
(b) Note that for the specific choice $u=g$ above, where $f=e^g$, we are allowed to consider $\e<0$ with small absolute value and can hence obtain equality.
\par
(c) Now pick
$$
 u(z) = \frac{z^n - 1}{z^n +1} = -1+ 2 z^n - 2 z^{2n} + \ldots
$$
to deduce that
$$
 2a_0 - a_n = \mathrm{Re\,} \{2a_0 - a_n\} \le 0\,.
$$
\par
(d) Let $\la$ be an arbitrary complex number with $|\la|\le 1$. Choose
$$
 u_\la (z) = \frac{\la z +1}{\la z - 1} = - (1 + 2\la z + 2 \la^2 z^2 + 2 \la^3 z^3 + \ldots)
$$
to infer that
$$
 \mathrm{Re\,} \{a_n+2 a_{n-1}\la + \ldots +2 a_1 \la^{n-1} + 2a_0 \la^n\} \ge 0
$$
whenever $|\la|\le 1$.
\par
(e) Follows from our formula \eqref{str-extr-fcn} by another variation. Namely, for any $k$ with $1\le k\le N$, we can consider the function
\begin{eqnarray*}
 g_\e (z) &=& f(z)\,e^{\e \frac{\a_k z+1}{\a_k z-1}} = f(z)\,\(1+ \e \frac{\a_k z+1}{\a_k z-1} + O(\e^2) \)
\\
 &=& f(z)\, \(1-\e (1+2\a_k z+2 \a_k^2 z^2+\ldots + O(\e^2))\)\,, \quad \e\to 0\,,
\end{eqnarray*}
which is in competition with $f$ for being an extremal function for any small $\e$, positive or negative. From here one immediately realizes  that the $n$-th coefficient of $g_\e$ is precisely
\[
 a_n - \e P(\a_k) + O(\e^2)\,, \quad \e\to 0\,,
\]
and the real part of this function (on some open interval around $\e=0$) attains its maximum $a_n$ at $\e=0$. The statement follows easily from here.
\end{proof}
%%%%%%%%%%%%%%%%%%%%%		
\par\smallskip
Part (c), as was observed in \cite{Pe1}, has the following corollary: if $f$ is an extremal function for \eqref{eqn-extr-ref} then its constant term enjoys the estimate $a_0\le \sqrt{2}-1 \approx 0.41421356237\ldots$. This is immediate from the inequalities mentioned earlier: $2a_0\le a_n \le 1-a_0^2$. Of course, our ultimate goal would be to show that actually $a_0=1/e \approx 0.36787944117\ldots$	
\par
If the Krzy\.z conjecture is true then the suspected extremal functions should satisfy the equality $\mathrm{Re\,}a_n=2 a_0$ since $a_0=1/e$ and $a_n=2/e$. We will now show that the converse is also true. That is, proving this fact for any extremal function is equivalent to proving the Krzy\.z conjecture. We will show that there are also many other statements equivalent to it.
\par
It is worth mentioning that several existing partial results on the Taylor coefficients of an extremal function either go in another direction or seem to use stronger initial hypotheses. For example, it was shown in \cite{Pe1} that if $n$ is odd, $f$ is extremal for \eqref{eqn-extr-prob}, and $a_1 = a_3 = \cdots = a_{n-2} = 0$ then $|a_0|\le 1/e$ and equality holds if and only if $|a_n|= 2/e$. Before proceeding on to improving this result, we need to review some basic facts.
%%%%%%%%%%%%%%%%%%%%
\par\medskip
\textbf{On the coefficients of the polynomial associated with an extremal function}. Denote by $\la_k$, $1\le k\le n$, the zeros of the polynomial $P$ defined in Proposition~\ref{prop-var}. Since
$$
 P(z) = 2 a_0 \prod_{k=1}^n (z-\la_k)\,,
$$
it follows that
\begin{equation}
 a_n = P(0) = 2 (-1)^n a_0 \prod_{k=1}^n \la_k \,.
 \label{a_n-a_0}
\end{equation}
In view of Proposition~\ref{prop-var} and the fact that $a_n$,
$a_0>0$, we get
\begin{equation}
 (-1)^n \prod_{k=1}^n \la_k \ge 1 \,.
 \label{prod-roots}
\end{equation}
We actually know more: $|\la_k|\ge 1$ for all $k\in\{1,2,\ldots,n\}$.
The reason is that if for some $j$ we had $|\la_j|<1$ and $P(\la_j)=0$  then the open mapping theorem for analytic functions would imply that in any small neighborhood of $\la_j$ there is a point $z$ at which Re\,$P(z)<0$, which would contradict the fact that Re\,$P(z)\ge 0$ in
$\overline{\D}$.
%%%%%%%%%%%%%%%%%%%%%
\par\medskip
\textbf{The Fej\'er lemma}. Given a complex polynomial of degree $n$: $P(z)=\sum_{k=0}^n c_k z^k$, if we look at its restriction to the unit circle and write each $z$ of modulus one as $z=e^{i t}$, $t\in [0,2\pi]$, it is easy to see that Re\,$P$ is a trigonometric polynomial of degree $n$:
\begin{equation}
 T(t)=\a_0+\sum_{k=0}^n (\a_k \cos k t + \b_k \sin k t)	
 \label{trig-pol}
\end{equation}
and can, thus, have at most $2n$ zeros in $[0,2\pi]$. In particular,
from here we see the following:
\par
\textit{If the real part of a complex polynomial $P$ vanishes on the unit circle then $P$ is identically equal to a purely imaginary constant.}
\par
The following classical lemma due to Fej\'er (see \cite[p.~154--155]{Ts}) characterizes an important class of
trigonometric polynomials.
%%%%%%%%%%%%%%%%%%%%%
\smallskip\par\noindent
{\sc Fej\'er's Lemma}. \textit{If $T$ is a trigonometric polynomial as in \eqref{trig-pol} and $T(t)\ge 0$ for all $t\in [0,2\pi]$ then there are complex coefficients $\g_j$, $0\le j\le n$, such that}
$$
 T(t)=|\g_0+\g_1 e^{i t} +\ldots \g_n e^{i n t}|^2\,, \quad for\ all\  t\in [0,2\pi]\,.
$$
%%%%%%%%%%%%%%%%%%%%%
\par\medskip
\textbf{Wiener's trick}. The following argument is well-known and appears in different contexts in complex analysis. It is actually the basis of F.~W.~Wien\-er's proof of the inequality $|a_n|\le 1-|a_0|^2$ mentioned earlier; see \cite[p.~4]{Bo}, for example. Thus, we shall refer to it as the \emph{Wiener trick\/}.
\par
Given a function $f\in\cb_\ast$ with $f(z)=\sum_{k=0}^\infty a_k z^k$ and a fixed integer $n>1$, consider the primitive $n$-th root of unity: $\om=e^{2\pi i/n}$. It is routine to check that the \textit{Wiener transform\/} of $f$, given by
\begin{equation}
 W_n f(z)= \frac1{n} \sum_{k=0}^{n-1} f(\om^k z) = \sum_{k=0}^\infty a_{nk}  z^{nk}\,,
 \label{fcn-wt}
\end{equation}
is of the form $H(z^n)$, where
\begin{equation}
 H(z)=\sum_{k=0}^\infty a_{nk} z^k\,.
 \label{fcn-h}
\end{equation}
Obviously, both $W_n f$ and $H$ are analytic in $\D$ and bounded by one there. Moreover, $W_n f(0)= H(0)=a_0$ and $H^\prime (0)=a_n$. This will sometimes allow us to translate the problem for the $n$-th Taylor coefficient to the problem for the first coefficient, already solved by Lemma~\ref{lem-n=1}.
\par
Note that when $f$ is extremal, the function $H$ defined in \eqref{fcn-h} and associated with its Wiener transform $W_n f$ may or may not vanish in $\D$. (If it does not, the conjecture will easily follow as we will see later.) In either case, the standard Riesz factorization for Hardy spaces \cite[Chapter~2]{Du1} tells us that there exist a Blaschke product $B$ (possibly a constant of modulus one) and a function $G$ which is analytic and non-vanishing in $\D$ such that
\begin{equation}
 H = B G\,, \qquad \|G\|_\infty = \|H\|_\infty \le 1\,.
 \label{factor}
\end{equation}
Since for any $\a$ with $|\a|=1$ we have $H = (\a B) (\overline{\a} G)$ and $H(0)=a_0>0$, we can replace $B$ by $\a B$ and so without loss of generality we may assume that $B(0)$ is real and positive. Thus,
$$
 W_n f(z) = a_0 + a_n z^n + \ldots = B(z^n) G(z^n) = (B_0 + B_n z^n + \ldots) (C_0 + C_n z^n + \ldots)
$$
with $B_0=B(0)>0$. Obviously,
\begin{equation}
 a_0=B_0 C_0\,, \qquad a_n = B_0 C_n + B_n C_0\,,
 \label{coeff-reln}
\end{equation}
and since $a_0>0$ we see that actually $C_0>0$ as well. This discussion includes the case when $W_n f$ does not vanish in the disk, meaning that $B\equiv B_0=1$ in that case. We have already proved that $2\le a_n/a_0$ for any extremal function. We will now prove a related upper bound.
%%%%%%%%%%%%%%%%%%%%%
\begin{prop} \label{prop-wien}
Let $f$ be an extremal function for the Krzy\.z problem. Then, with $B$ as in \eqref{factor} and $B_0=B(0)$ normalized so that $0<B_0\le 1$, we have $$
 \frac{a_n}{a_0} \le 1 + \frac1{B_0}\,.
$$
\end{prop}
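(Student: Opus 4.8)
The plan is to exploit the Riesz factorization $H=BG$ from \eqref{factor} together with the extremality of $f$, reducing everything to a single scalar inequality for the ratio $R=a_n/a_0$. From \eqref{coeff-reln} we have $a_0=B_0C_0$ and $a_n=B_0C_n+B_nC_0$, so that
\[
 R=\frac{a_n}{a_0}=\frac{C_n}{C_0}+\frac{B_n}{B_0}\,.
\]
Since $R$ is real and positive, it equals $\mathrm{Re}\,(C_n/C_0)+\mathrm{Re}\,(B_n/B_0)$, and the whole problem becomes one of bounding these two real parts separately: the first using the non-vanishing factor $G$ together with the extremality of $f$, the second using only that $B$ is a self-map of $\D$.

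The key step---and the part I expect to be the main obstacle---is to bound $\mathrm{Re}\,(C_n/C_0)$ by using $G$ to manufacture a genuine competitor for the extremal problem. Because $G$ is analytic, non-vanishing and bounded by one in $\D$, the function $\tilde f(z)=G(z^n)$ again lies in $\cb_\ast$, its constant term $\tilde f(0)=C_0>0$ is positive, and its $n$-th Taylor coefficient is exactly $C_n$. Hence the defining property \eqref{eqn-extr-ref} of $M_n$ yields $\mathrm{Re}\,C_n\le M_n=a_n$, the last equality holding because $f$ is extremal. Dividing by $C_0=a_0/B_0>0$ turns this into
\[
 \mathrm{Re}\,\frac{C_n}{C_0}\le\frac{a_n}{C_0}=\frac{a_n}{a_0}\,B_0=R\,B_0\,.
\]
Recognizing that $G(z^n)$ is precisely the right competitor is the crucial (and not at all automatic) idea; once it is in hand the rest is essentially bookkeeping, and in particular I would not expect to need the sharp non-vanishing coefficient estimate, only the plain definition of $M_n$.

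For the Blaschke part I would invoke the Schwarz--Pick lemma exactly as in the proof of Lemma~\ref{lem-n=1}: since $B$ maps $\D$ into $\D$ with $B(0)=B_0$, one has $|B_n|=|B'(0)|\le 1-B_0^2$, whence $\mathrm{Re}\,(B_n/B_0)\le (1-B_0^2)/B_0$. Adding the two estimates gives $R\le R\,B_0+(1-B_0^2)/B_0$, that is $R(1-B_0)\le(1-B_0)(1+B_0)/B_0$. When $B_0<1$ I may cancel the positive factor $1-B_0$ to conclude $R\le 1+1/B_0$, as desired. The degenerate case $B_0=1$ has to be handled separately, since the cancellation is then vacuous; but there $B$ reduces to a unimodular constant, so $H$ is itself non-vanishing and belongs to $\cb_\ast$ with $H(0)=a_0$ and $H'(0)=a_n$, and applying Lemma~\ref{lem-n=1} directly to $H$ gives $a_n\le 2/e$. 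Combined with $a_n=M_n\ge 2/e$ this forces $a_n=2/e$, $a_0=1/e$ and $R=2=1+1/B_0$, so the asserted bound holds (with equality) in that case as well.
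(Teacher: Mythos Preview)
Your proof is correct and rests on the same two ingredients as the paper's: the bound $|B_n|\le 1-B_0^{2}$ for the Blaschke factor and the observation that $G(z^n)\in\cb_\ast$ competes with $f$, forcing $\mathrm{Re}\,C_n\le a_n$. The paper packages these as a short proof by contradiction rather than your direct inequality $R(1-B_0)\le(1-B_0)(1+B_0)/B_0$; as a minor bonus, your version treats the degenerate case $B_0=1$ explicitly via Lemma~\ref{lem-n=1}, whereas the paper's strict-inequality chain silently collapses there.
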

%%%%%%%%%%%%%%%%%%%%%
\begin{proof}
Assume the contrary:
$$
 \frac{a_n}{a_0} > 1 + \frac1{B_0}\,.
$$
In view of \eqref{coeff-reln}, this means that $a_n> a_0+C_0 = C_0(1+B_0)$. This yields
$$
 a_n (1-B_0) > C_0 (1-B_0^2) \ge C_0 |B_n| \ge C_0\,\mathrm{Re\,}B_n
$$
by the well-known inequality $|B_n|\le 1-B_0^2$ that follows from Wiener's trick. From here we get that
$$
 a_n - C_0\,\mathrm{Re\,}B_n > a_n B_0\,.
$$
Thus, again by \eqref{coeff-reln},
$$
 B_0\,\mathrm{Re\,}C_n = \mathrm{Re\,}\{a_n - C_0 B_n\} = a_n - C_0\,\mathrm{Re\,}B_n > a_n B_0\,.
$$
Since $B_0>0$, this shows that $\mathrm{Re\,}C_n > a_n$. But $G(z^n)$ belongs to the class $\cb_\ast$ and is thus in contention with $f$. This contradicts the assumption that $f$ is extremal in $\cb_\ast$.
\end{proof}
%%%%%%%%%%%%%%%%%%%%
\par\medskip
\textbf{Inequalities of Carath\'eodory and Livingston type}. Denote by $P$ the class of all analytic functions $u$ in $\D$ such that Re\,$u(z)>0$ in $\D$ and $u(0)=1$. If we write the Taylor series expansion of such $u$ in the disk as
$$
 u(z) = 1 + b_1 z + b_2 z^2 + b_3 z^3 + \ldots\,,
$$
the well-known Carath\'eodory's lemma \cite[Chapter~2]{Du2} states that $|b_n|\le 2$ for all $n\ge 1$. There are many other inequalities for the coefficients in this class, several of them due to Livingston. Here we only need one such inequality which can also be deduced without much effort from \cite[Lemma~1]{L} but we give our own proof.
%%%%%%%%%%%%%%%%%%%%%
%% Lemma.
%%%%%%%%%%%%%%%%%%%%%
\begin{lem}
 \label{lem-livingst}
If $-u\in P$, $u(z) = -1 + b_1 z + b_2 z^2 + b_3 z^3 + \ldots$,
and $k\in\N$ then
$$
 \left| b_{2k} + \frac{b_k^2}{2}\right| \le 2\,.
$$
\end{lem}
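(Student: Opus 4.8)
The plan is to reduce the statement to the standard Herglotz--Riesz representation of functions with positive real part. Writing $p=-u$, the hypothesis $-u\in P$ says precisely that $p$ is analytic in $\D$ with $\mathrm{Re}\,p>0$ and $p(0)=1$, i.e.\ $p\in P$, and if $p(z)=1+\sum_{n\ge1}p_n z^n$ then $p_n=-b_n$ for every $n$. Since $b_{2k}=-p_{2k}$ and $b_k^2=p_k^2$, the quantity to be estimated is $b_{2k}+\tfrac12 b_k^2=-(p_{2k}-\tfrac12 p_k^2)$, so the claim is equivalent to the inequality $|p_{2k}-\tfrac12 p_k^2|\le 2$ for an arbitrary $p\in P$.

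First I would invoke the Herglotz representation: there is a Borel probability measure $\mu$ on $\T$ with $p(z)=\int_{\T}\frac{\zeta+z}{\zeta-z}\,d\mu(\zeta)$. Expanding the kernel in powers of $z$ gives the moment formula $p_n=2\int_{\T}\overline{\zeta}^{\,n}\,d\mu(\zeta)$ for $n\ge1$ (which, incidentally, recovers Carath\'eodory's bound $|p_n|\le2$). Substituting $n=k$ and $n=2k$, then setting $w=\overline{\zeta}^{\,k}$ (so $|w|=1$ on the support of $\mu$) and $m=\int_{\T}w\,d\mu$, I obtain $p_{2k}-\tfrac12 p_k^2=2\bigl(\int_{\T}w^2\,d\mu-m^2\bigr)$.

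The crux is then a completion-of-squares identity together with the fact that $\mu$ is a probability measure: since $\int_{\T}(w-m)^2\,d\mu=\int_{\T}w^2\,d\mu-m^2$, the triangle inequality for integrals yields $|p_{2k}-\tfrac12 p_k^2|=2\bigl|\int_{\T}(w-m)^2\,d\mu\bigr|\le 2\int_{\T}|w-m|^2\,d\mu$. Expanding the last integrand and using $|w|\equiv1$ and $\int_{\T}w\,d\mu=m$ gives $\int_{\T}|w-m|^2\,d\mu=1-|m|^2\le1$, whence $|p_{2k}-\tfrac12 p_k^2|\le2$, as desired.

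I do not expect a genuine obstacle here: the argument is short once one passes to the representing measure, and the only point requiring care is the bookkeeping of signs in the reduction $b_n=-p_n$ and the observation that viewing $\zeta\mapsto\overline{\zeta}^{\,k}$ converts the moments at indices $k$ and $2k$ into the first and second moments of a single probability measure -- which is exactly what lets the \emph{variance} estimate close. An alternative, staying closer to the toolkit already developed, would replace the integral representation by the Wiener trick: the sub-sequence $(p_{nk})_{n\ge0}$ is again a positive-definite moment sequence, so the associated $3\times3$ Hermitian Toeplitz matrix is positive semidefinite, and extracting the relevant minor reproduces the same inequality; I would nonetheless prefer the representing-measure proof above for its brevity.
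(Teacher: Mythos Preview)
Your proof is correct. The sign reduction $b_n=-p_n$ is right, the Herglotz moment formula $p_n=2\int_{\T}\overline{\zeta}^{\,n}\,d\mu$ is standard, and the variance computation $\int|w-m|^2\,d\mu=1-|m|^2$ closes the estimate cleanly.

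It is, however, a genuinely different route from the paper's. The paper first treats only $k=1$: writing $u=(h-1)/(h+1)$ with $h$ an analytic self-map of $\D$ satisfying $h(0)=0$, one differentiates the relation $uh+u=h-1$ twice to obtain $2c_2=b_2+\tfrac12 b_1^2$, where $c_2=h''(0)/2$, and then invokes the Schwarz--Pick type bound $|c_2|\le 1-|h(0)|^2=1$. The passage to arbitrary $k$ is then made by the Wiener trick applied to $W_k u$. Your approach via the representing measure handles all $k$ at once and replaces the Schwarz lemma step by a moment/variance inequality; this is shorter and conceptually tidy. The paper's argument, on the other hand, stays within the disk-automorphism formalism already used in Lemma~\ref{lem-n=1} and avoids appealing to the Herglotz representation explicitly. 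Your suggested alternative via the $3\times3$ positive semidefinite Toeplitz matrix is essentially the Carath\'eodory--Toeplitz reformulation of the same Herglotz argument and would also work.
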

%%%%%%%%%%%%%%%%%%%%%
\begin{proof}
It suffices to prove the inequality in the case $k=1$:
$$
 \left| b_{2} + \frac{b_1^2}{2}\right| \le 2\,.
$$
To this end, note that every $u$ such that $-u\in P$ can be written as $u=(h-1)/(h+1)$ where $h$ is an analytic function from $\D$ into itself and $h(0)=0$. Let $c_2=h^{\prime\prime}(0)/2$ be the second Taylor coefficient of $h$ at the origin; then by the inequality mentioned earlier for all analytic self-maps of the disk we have $|c_2|\le 1-|h(0)|^2=1$. By differentiating the equality
$$
 uh +u = h-1
$$
twice and taking into account that $u(0)=-1$ and $c_0=h(0)=0$, a direct computation yields
$$
2 c_2 =  b_{2} + \frac{b_1^2}{2}
$$
and the desired inequality follows.
\par
The case $k=1$ of Livingston's inequality already proved applied to the  Wiener's transform $W_k u$ yields the statement in the general case $k>1$.
\end{proof}
%%%%%%%%%%%%%%%%%%%%%
\par\medskip
\textbf{Inductive sets}. In some papers the Krzy\.z conjecture was proved under the additional hypotheses on an extremal function that $a_i=0$ for all $i$ belonging to some $I\subset \{1,2,\ldots,n-1\}$. Typically, ``about a half of these initial coefficients'' are assumed to vanish.
\par
More specifically, in Brown's paper \cite{Br1} on a similar but more general problem for $H^p$ spaces (see Corollary~2 and the comment that follows it in \cite{Br1}) it was shown that the assumption that $a_i=0$ whenever $1\le i< (n+1)/2$ implies the conjecture.
\par
Also, Peretz \cite{Pe1} proved that:
\par
(a) if $n$ is odd and $a_1=a_3= \ldots =a_{n-2}=0$ then $a_0\le 1/e$,
\par
(b) if, besides the conditions listed in (a), $a_0=1/e$ actually holds then $a_n=2/e$.
\par
It should be noted that Brown's assumptions $a_i=0$ whenever $1\le i< (n+1)/2$ easily imply that also $b_i=0$ whenever $1\le i< (n+1)/2$, with the notation as in our Lemma~\ref{lem-exp}. It is also quite simple to check that, for $n$ odd, Peretz's assumptions $a_1=a_3= \ldots =a_{n-2}=0$ imply that $a_k=a_0 b_k$ for each odd $k\in \{1,2,\ldots,n\}$, hence we also have $b_1=b_3= \ldots =b_{n-2}=0$.
\par
Here we sketch a quick proof of both results in one stroke, without discussing  the case of equality. Recall that we are assuming that $a_0>0$ and $a_n>0$. This forces that $\mathrm{Re\,}b_0<0$ hence, by
periodicity of the exponential function we can impose the additional assumption $b_0<0$ without loss of generality. Such normalization, together with the recurrence relations \eqref{rec-exp}, yield
$$
 a_n = a_0 b_n = |a_0 b_n| = \left| \frac{b_n}{b_0} \right| |b_0| e^{-|b_0|}\,.
$$
The function $g/b_0$ belongs to the normalized class $P$ so Carath\'eodory's lemma applies: $|b_n/b_0|\le 2$; also,  as observed before, the function $x e^{-x}$ achieves its maximum $1/e$ at $x=1$ hence $|a_n|\leq 2/e$, as asserted by Krzy\.z. From here we can already deduce that for any normalized extremal function $b_0=-1$, $a_0=1/e$,  and we shall see later that this is enough to deduce that the only normalized extremal function is the conjectured one.
\par\smallskip
In what follows, the sets $I$ of indices as in the papers \cite{Br1}, \cite{Pe1} will be called inductive sets. This general approach will lead to further examples and a unified proof of the conjecture under other similar assumptions. We first introduce some notation and give a formal definition below. Fix $n\in \mathbb{N}$, $n\geq 2$. Given $K\subset\{1,2,3,...,n-1\}$, define
$$
 \C_K^n = \{c=(c_{1},c_{2},...,c_n) \in \C^n\,\colon\, c_i=0 \  \textrm{for \ all\ } i\in K\}\,.
$$
By an \textit{additive semigroup\/} or simply \textit{semigroup\/} we will mean a subset of $\N$ closed under addition. For $K\subset\{1,2,3,...,n-1\}$, denote by $G(K)$ the additive semigroup generated by $(K\cup \{n\})^{c} = \N \setminus (K\cup \{n\})$.
%%%%%%%%%%%%%%%%%%%%%
\begin{defn}\label{DefInductiveSets}
Let $n\in \mathbb{N}$, $n\geq 2$, and $a_{0}>0$. A subset $I$ of $\left\{ 1,2,3,...,n-1\right\}$ is said to be $n$-\emph{inductive} if $a_{n}=a_{0}b_{n}$ for \emph{all\/} $a\in \C_I^n$ and $b\in \C^n$ that satisfy the recursion formula \eqref{rec-exp}.
\par
A subset $J$ of $\left\{ 1,2,3,...,n-1\right\}$ is said to be \emph{exponentially} $n$-\emph{inductive} if $a_{n}=a_{0}b_{n}$ for \emph{all\/} $a\in \mathbb{C}^{n}$ and $b\in \mathbb{C}_J^{n}$ that satisfy the recursion formula \eqref{rec-exp}.
\end{defn}
%%%%%%%%%%%%%%%%%%%%%
We will sometimes suppress the integer $n$ and simply say $I$ is \textit{inductive\/} or $J$ is \textit{exponentially inductive\/} when the value of $n$ is understood. The following lemma helps us to identify inductive and exponentially inductive sets explicitly and easily.
\par
%%%%%%%%%%%%%%%%%%%%%
\begin{lem} \label{lem-ind}
Fix $n\in \mathbb{N}$, $n\geq 2$, $a_{0}>0$.
\par
(a) Let $I=\{i\,\colon\,1\le i\le n-1, a_i=0\}$. Then $I$ is $n$-inductive if and only if $n\notin G(I)$.
\par
(b) Let $J=\{j\,\colon\,1\le j\le n-1, b_j=0\}$. Then $J$ is exponentially $n$-inductive if and only if $n\notin G(J)$.
\end{lem}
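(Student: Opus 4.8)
The plan is to treat both parts in parallel, reducing each to a vanishing condition for one of the two structural polynomials already constructed. The starting observation is that, once $a_0>0$ is fixed, the recursion \eqref{rec-exp} is a triangular system: given $b_1,\ldots,b_n$ it determines $a_1,\ldots,a_n$ uniquely, and conversely. Hence the quantifier ``for all $a\in\C^n$ and $b\in\C_J^n$ satisfying \eqref{rec-exp}'' collapses to ``for all $b\in\C_J^n$'' (with $a$ determined by $a_0$ and $b$), and symmetrically in part (a). Thus part (b) asks exactly when the identity $a_n=a_0b_n$ holds for every $b$ in the subspace $\C_J^n$, and part (a) asks when it holds for every $a$ in $\C_I^n$.

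For part (b) I would invoke Proposition~\ref{prop-polyn-a}, which gives $a_n/a_0=P_n(b_1,\ldots,b_n)$ with the essential feature that every monomial $b_{i_1}\cdots b_{i_m}$ indexed by a representation $n=i_1+\ldots+i_m$ (all $i_j\ge 1$) occurs with a strictly positive coefficient. The ``diagonal'' term ($m=1$, $i_1=n$) is precisely $b_n$, so $a_n=a_0b_n$ on $\C_J^n$ exactly when $P_n-b_n$ vanishes identically after setting $b_j=0$ for $j\in J$. Since all coefficients of $P_n-b_n$ are positive and distinct monomials are linearly independent, there is no cancellation: the restriction vanishes identically if and only if every surviving monomial is killed, that is, every representation $n=i_1+\ldots+i_m$ with $m\ge 2$ uses at least one index in $J$. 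Part (a) runs identically, but using Proposition~\ref{prop-polyn-b}: here $a_0b_n=a_0\,Q_n(a_1,\ldots,a_n)$, the $m=1$ term contributes exactly $a_n$ (its coefficient being $1/a_0$), and although the remaining terms carry the signs $(-1)^{m+1}$, the final assertion of Proposition~\ref{prop-polyn-b} guarantees that after collecting like terms each distinct product $a_{i_1}\cdots a_{i_m}$ still has a nonzero coefficient. So again no spurious cancellation occurs, and $a_n=a_0b_n$ on $\C_I^n$ if and only if every $m\ge 2$ representation of $n$ meets $I$.

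It then remains to translate the combinatorial condition ``every representation $n=i_1+\ldots+i_m$ with $m\ge 2$ and all $i_j\ge 1$ uses an index in $K$'' into ``$n\notin G(K)$''. This rests on one elementary remark: if $m\ge 2$ and $\sum_j i_j=n$ then each $i_j\le n-1$, so a surviving monomial corresponds exactly to writing $n$ as a sum of at least two elements of $\{1,\ldots,n-1\}\setminus K=(K\cup\{n\})^{c}\cap\{1,\ldots,n-1\}$. Since $n\notin(K\cup\{n\})^{c}$, no single generator of $G(K)$ equals $n$, so any representation of $n$ as a sum of generators must use at least two of them, each necessarily smaller than $n$; hence $n\in G(K)$ is equivalent to the existence of such a surviving monomial. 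Consequently the absence of surviving $m\ge 2$ monomials is exactly $n\notin G(K)$, yielding both equivalences.

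The main obstacle I anticipate is not the combinatorics but the non-cancellation bookkeeping in part (a): one must be certain that the sign pattern $(-1)^{m+1}$ in $Q_n$ produces no accidental cancellation when like terms are merged, which is precisely what Proposition~\ref{prop-polyn-b} secures. Once that is in hand, together with the remark that fixing $a_0$ turns the correspondence between the $a$'s and the $b$'s into a bijection, the two statements fall out symmetrically.
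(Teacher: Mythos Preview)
Your argument is correct and is genuinely different from the paper's. You reduce both directions at once to the vanishing of the ``off-diagonal'' part of $P_n$ (respectively $Q_n$) on the coordinate subspace $\C_J^n$ (respectively $\C_I^n$), and then use that distinct monomials are linearly independent together with the nonvanishing-coefficient clauses of Propositions~\ref{prop-polyn-a} and~\ref{prop-polyn-b} to see that this restriction is identically zero precisely when no $m\ge 2$ composition of $n$ survives, which you correctly identify with $n\notin G(K)$. The paper instead treats the two directions separately: for $n\notin G(K)\Rightarrow$ inductive it runs a finite induction on the set $Z=\{1,\ldots,n-1\}\setminus G(I)$ to show the companion coefficients $b_{z_\ell}$ vanish and then reads off $a_n=a_0b_n$ from \eqref{rec-exp}; for the converse it perturbs finitely many coordinates $a_{g_1},\ldots,a_{g_s}$ corresponding to a decomposition $n=g_1+\ldots+g_s$ with $g_i\notin I\cup\{n\}$, observes that $a_n-a_0 b_n$ becomes a nonconstant polynomial in the perturbation parameters, and invokes the Open Mapping Theorem in several variables. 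Your route is cleaner and more symmetric (both implications and both parts fall out of the same vanishing criterion), while the paper's construction for $(\Rightarrow)$ makes explicit a witness pair $(a,b)$ with $a_n\neq a_0b_n$, which your argument yields only implicitly.
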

%%%%%%%%%%%%%%%%%%%%%
\begin{proof}
(a) ($\Leftarrow$) Let $I\subset \left\{ 1,2,3,...,n-1\right\} $, and assume that $n\notin G(I)$. Let $Z = \left\{ 1,2,3,...,n-1\right\} \setminus G(I) \subset I$. Enumerate $Z$ as $Z =\left\{ z_{\ell }\right\}_{\ell =1}^{L}$ with $z_{1}<z_{2}<...<z_{L}<n$. We will show that $I$ is an inductive set by using induction on $\ell$ (in its finite version) to prove that
\begin{equation}\label{IfAIsZeroSoIsB}
 b_{z_{\ell }}=0\,, \qquad 1\leq \ell \leq L\,.
\end{equation}
To this end we will repeatedly use that, whenever $1\le j<n$, we have $j\in Z$ if and only if $j\notin G(I)$. For the inductive base case  $\ell=1$, note that, necessarily, $z_1 = 1$. Indeed, if not, then $1 \in G(I)$ and thus $n \in G(I)$, contradicting our assumption. Then we have from \eqref{rec-exp} that
\begin{equation*}
 0=a_{z_{1}}=a_1 = a_{0} b_{1}\,,
\end{equation*}
which implies that $b_{1}=0$ since $a_{0}$ is positive by assumption.
\par
Now we prove the inductive step that if  $b_{z_{k}}=0$ for all $k<\ell$ then also $b_{z_\ell}=0$. Indeed, assuming $b_{z_{k}}=0$ for all $k<\ell$, we have
\begin{equation}
 0=a_{z_{\ell }}=\sum_{j=1}^{z_{\ell }-1} \frac{j}{z_{\ell}} a_{z_{\ell }-j}b_{j}+ a_{0}b_{z_{\ell }}= a_{0}b_{z_{\ell }}\ ,  \label{identity}
\end{equation}%
since if $1\leq j\leq z_{\ell }-1$, then either $j\in Z$ and so $b_{j}=0$ by the inductive hypothesis, or $j\in G(I)$ and so $z_{\ell }-j\notin G(I)$ (since otherwise the semigroup property of $G(I)$ would give $z_{\ell}=j+z_{\ell}-j\in G(I)$, a contradiction), hence $z_{\ell}-j\in Z$. Thus $a_{z_{\ell }-j}=0$. In either case we have $a_{z_{\ell }-j}b_{j}=0$ and so \eqref{identity} holds. We now get $b_{z_{\ell}}=0$ since $a_{0}>0$. This completes the proof of \eqref{IfAIsZeroSoIsB}.
\par
Now we prove that $I$ is inductive from the same argument using that $n\notin G(I)$. Indeed,
\begin{equation*}
 a_{n}=\sum_{j=1}^{n-1} \frac{j}{n} a_{n-j}b_{j}+ a_{0}b_{n}= a_{0}b_{n}\ ,
\end{equation*}%
since if $j\in Z$ then $b_{j}=0$, while if $j\in G(I)$ then $n-j\notin G(I)$ and so $a_{n-j}=0$.
\par\smallskip
($\Rightarrow$) To prove the reverse implication, assume the contrary: $n \in G(I)$. We will show that then $G(I)$ is not $n$-inductive. In other words, we will show that for some $a\in \mathbb{C}_I^{n}$ and $b\in \mathbb{C}^{n}$ that satisfy the recursion formula \eqref{rec-exp} we will have $a_n\neq a_0 b_n$. To this end, start off with a pair of $n$-tuples $a\in \mathbb{C}_I^{n}$ and $b\in \mathbb{C}^{n}$ that satisfy \eqref{rec-exp}. If we have $a_n\neq a_0 b_n$ to begin with then there is nothing to prove, so assume that $a_n = a_0 b_n$.
\par
Since $n\in G(I)$ there exists $g_1$, $g_2$,\ldots,$g_s$ with $g_i \in (I\cup\{n\})^{c}$, $1\le i \le s$, and $g_1 + g_2 + \dots + g_s = n$. Note that $s\ge 2$ since none of the $g_i$ can be equal to $n$. We will now perturb some of the numbers $a_p$ while still requiring that the recurrence relations \eqref{rec-exp} hold, in such a way that the new point $a=(a_{1},a_{2},...a_{n}) \in \mathbb{C}_{I}^{n}$, but it is no longer true that $a_n = a_0 b_n$, or, equivalently, it is no longer true that $a_n - a_0 b_n = 0$. More precisely, denote the perturbed values $a_p$ by $\tilde{a}_p$, for $1\le p\le n$, and define
\begin{eqnarray} \label{PerturbationRelations}
 \tilde{a}_p & = & \left\{
 \begin{array}{ll}
 a_p + \varepsilon_p, &  \text{ if } p \in \{g_1,\ldots,g_s\}\,,  \nonumber \\
 a_p,  & \text{ if } p \in \{0,1,2,\ldots\,n\}\setminus\{g_1,\ldots,g_s\}\,.
 \end{array}
 \right.
\end{eqnarray}
(Since $a_0$ is fixed, we do not alter its value so we may formally understand that also $\tilde{a_0}=a_0$.) These values $\tilde{a}_p$ determine uniquely the corresponding new perturbed value $\tilde{b}_n$ according to Proposition~\ref{prop-polyn-b}:
\begin{eqnarray*}
 && \tilde{a}_n - a_0 \tilde{b}_n = \tilde{a}_n - a_0 Q_n (\tilde{a}_1,\tilde{a}_2,\ldots,\tilde{a}_n) =
\\
 && \tilde{a}_n - a_0 \displaystyle \sum_{\substack{1\le m\le n, \\  \sum_{j=1}^m i_j(n) = n}} (-1)^{m+1} c_{i_1(n),i_2(n),\ldots,i_m(n)}(a_0)\,\tilde{a}_{i_1(n)} \tilde{a}_{i_2(n)}\ldots\tilde{a}_{i_m(n)}\,.
\end{eqnarray*}
In view of our definition of $\tilde{a}_p$, the above value is a polynomial in the $s$ variables $\e_{g_1}$,\ldots,$\e_{g_s}$. Because of the assumption  that $g_1 + g_2 + \dots + g_s = n$, the polynomial above will contain a term with a non-zero coefficient, namely
\begin{eqnarray*}
 & & (-1)^{m+1}c_{g_1,g_2,\ldots,g_s}(a_0)\,\tilde{a}_{g_1} \tilde{a}_{g_2}\ldots \tilde{a}_{g_s}
\\
 &=& (-1)^{m+1} c_{g_1,g_2,\ldots,g_s}(a_0)\,(a_{g_1}+\e_{g_1}) (a_{g_2}+\e_{g_2}) \ldots (a_{g_s}+\e_{g_s})\,,
\end{eqnarray*}
which after an expansion will contain the term $\e_{g_1} \e_{g_2}\cdots\e_{g_s}$ that cannot possibly appear in any other summand.
\par
The Open Mapping Principle is well-known to hold for analytic functions from $\C^m$ to $\C$ and, in particular, for polynomials of several variables. Note that the above polynomial is a non-constant function because it contains at least one term whose corresponding coefficient does not vanish. Also, it takes on the value zero at the point $(\e_{g_1},\e_{g_2},\ldots,\e_{g_s})= (0,0,\ldots,0)$ by our assumption that $a_n = a_0 b_n$. Hence it follows that in a neighborhood of this point the polynomial takes on non-zero values. Thus, there is a perturbation that makes $\tilde{a}_n - a_0 \tilde{b}_n \neq 0$, and we are done.
\par\medskip
(b) ($\Leftarrow$) Completely analogous to the case (a), reversing the roles of $a_k$'s and $b_k$'s in the observations.
\par\smallskip
($\Rightarrow$) The converse is completely analogous to the case (a), using Proposition~\ref{prop-polyn-a} instead of Proposition~\ref{prop-polyn-b}, with the same idea involving perturbations and using the fact that all terms that should appear in the polynomial $P_n$ actually do appear because the relevant coefficients are non-zero.
\end{proof}
%%%%%%%%%%%%%%%%%%%%%
\par\smallskip
%%%%%%%%%%%%%%%%%%%%%
\begin{rk} \label{rk-main}
Note that the situation considered by Peretz \cite{Pe1} corresponds to the semigroup $G(I)=2\N=\{2,4,6,8,\ldots\}$. In Brown's result \cite{Br1}, $G(I)$ is the semigroup generated by the set $\{i\in\N\,\colon\, (n+1)/2\le i\le n-1\}$. In both cases, as observed before, $I=J$ hence $G(I)=G(J)$.
\end{rk}
%%%%%%%%%%%%%%%%%%%%%
\par
We would like to point out that different examples indeed exist. A general example of a pertinent semigroup is $G=\{k,2k,3k,\ldots\}$ for any fixed $k\ge 2$, $k\in\N$, such that $n$ is not a multiple of $k$. An even more general family of examples is obtained by choosing $1<a\le b$ and letting $G_{a,b}$ be the semigroup generated by $\left\langle a,b\right\rangle \equiv \{k\in\N\,\colon\,a\le k\le b\}$. It is not difficult to see that
$$
 G_{a,b}=\bigcup_{\ell =1}^{\infty }\left\langle \ell a,\ell
b\right\rangle \ .
$$
Whenever it is possible to choose $k>1$ so that
$kb+2=\left( k+1\right) a$, we can consider the value $n=kb+1$ so that
$$
 G_{a,b}\cap \left\langle 1,n\right\rangle =\left\langle a,b\right\rangle
 \overset{\cdot }{\cup }\left\langle 2a,2b\right\rangle \overset{\cdot }{\cup
 }\left\langle 3a,3b\right\rangle \overset{\cdot }{\cup }...\overset{\cdot }{\cup }\left\langle ka,kb\right\rangle .
$$
Now we compute the density of $I$ in the set $\{1,2,\ldots,n-1\}$ to be
\begin{eqnarray*}
 \frac{n-1-\#\left( G_{a,b}\cap \left\langle 1,n-1\right\rangle \right) }{n-1} &=& \frac{n-1-\sum_{\ell =1}^{k}\left[ \ell \left( b-a\right) +1\right] }{n-1}
\\
 &=& \frac{n-1-\left[ \frac{k\left( k+1\right) }{2}\left( b-a\right) +k\right]}{n-1}
\\
 &=&\frac{n-1-\frac{1}{2}\left[ \left( k+1\right) \left( a-2\right) +2k\right]}{n-1}
\\
 &=&\frac{kb-\frac{1}{2} kb}{n-1} = \frac{1}{2}\,.
\end{eqnarray*}%
Note that density $\frac{1}{2}$ is the smallest density needed to deduce  $a_n=a_0 b_n$ following the above methods, since the fact that $j + (n-j) = n \notin G(I)$ implies that either $j$ or $(n-j)$ is in $I$, hence $I$ must have density at least $\frac{1}{2}$ within the set $\{1,2,\ldots,n-1\}$.
\par\medskip

%%%%%%%%%%%%%%%%%%%%%%%%%%%
\section{The main result and its proof}
%%%%%%%%%%%%%%%%%%%%%%%%%%%
\par
At this point it is convenient to summarize some of the findings on extremal functions for the normalized Krzy\.z problem \eqref{eqn-extr-ref}. Recall that this normalization requires that $a_0>0$, hence $\mathrm{Re\,}b_0<0$. As observed before, due to periodicity of the exponential function, without loss of generality we may actually assume that $b_0\in\R$ and $b_0<0$. We know from our earlier discussions that any such function $f$ fulfills the following conditions:
\begin{itemize}
\item[(i)]
$a_n>0$ (in fact, $a_n=M_n\ge 2/e$).
\item[(ii)]
$2\le \frac{a_n}{a_0} \le 1 + \frac1{B_0}$, where $B_0$ is the constant term in the Blaschke factor of the factorization given in \eqref{factor} normalized so that $0<B_0\le 1$.
\item[(iii)]
The polynomial $P(z) = a_n + 2 a_{n-1} z + \ldots 2 a_1 z^{n-1} + 2 a_0 z^n$ from Proposition~\ref{prop-var} has non-negative real part on the closed unit disk $\overline{\D}$ and strictly positive real part on $\D$.
\item[(iv)]
$N\le n$ in formula \eqref{str-extr-fcn} and also Re\,$b_n\le 2 |b_0|$ (the function $g/b_0$ has positive real part in $\D$ and value one at the origin; by Carath\'eodory's lemma \cite[p.~41]{Du2}, its Taylor coefficients are bounded by two).
\item[(v)]
The zeros $\la_j$, $1\le j\le n$, of the polynomial $P$ satisfy $|\la_k|\ge 1$ for $1\le k\le n$ and \eqref{prod-roots}.
\end{itemize}
\par
Our aim is to show that, essentially, if equality holds in any one of the above inequalities, then the conjecture is true. We are now ready to state and prove our main result.
\par
%%%%%%%%%%%%%%%%%%%%%
%% Theorem.
%%%%%%%%%%%%%%%%%%%%%
\begin{thm}
 \label{thm-equiv}
Let $n\ge 2$ and consider an arbitrary but fixed extremal function $f$ for the Krzy\.z problem \eqref{eqn-extr-ref}. Writing $f(z)= \sum_{j=0}^\infty a_j z^j$, $g(z)= \sum_{j=0}^\infty b_j z^j$, and $f=e^g$ as before, we know that $a_n$, $a_0>0$ and may also assume without loss of generality that $b_0<0$. Consider the quantity $B_0$, the polynomial $P$ and its zeros as described above.
\smallskip\par\noindent
$\mathrm{(I)}$ The following statements are equivalent:
\begin{itemize}
\item[(a)]
$a_n=2 a_0$;
\item[(b)]
$a_k=0$ when $1\le k<n$ (equivalently by \eqref{rec-exp}, $b_k=0$ when $1\le k<n$);
\item[(c)]
$f(z)=e^{(z^n-1)/(z^n+1)}$ (and, in particular, $M_n=2/e$);
\item[(d)]
the set $I$ consisting of all indices $i\in\{1,2,\ldots,n-1\}$ for which $a_i=0$ is $n$-inductive;
\item[(e)]
the set $J$ consisting of all indices $j\in\{1,2,\ldots,n-1\}$ for which $b_j=0$ is exponentially $n$-inductive;
\item[(f)]
$g(z)=(z^k H(z)-1)/(z^k H(z)+1)$ for some analytic function $H$ in $\D$ such that $|H(z)|\le 1$ for all $z\in\D$ and $k\in\N$, $k\ge n/2$;
\item[(g)]
the zeros of the polynomial $P$ satisfy $(-1)^n\prod_{k=1}^n \la_k=1$;
\item[(h)]
the zeros of $P$ all lie on the unit circle;
\item[(i)]
the zeros of $P$ are actually the $n$-th roots of $-1$;
\item[(j)]
$a_n b_0+a_0 \mathrm{Re\,}\{b_n\}=0$;
\item[(k)]
$\mathrm{Re\,}\{a_1 b_{n-1} +a_2 b_{n-2} + \ldots + a_{n-1} b_1\}=0$;
\item[(l)]
$a_n=a_0\,\mathrm{Re}\,b_n$.
\item[(m)]
Re\,$b_n=2 |b_0|$, $N=n$, and $r_1=r_2=\ldots =r_n$;
 \item[(n)]
$B_0=1$;
 \item[(o)]
$W_n f$ does not vanish in $\D$;
 \item[(p)]
$W_n f\equiv f$;
 \item[(q)]
$g=W_n g$.
\end{itemize}
\smallskip\par\noindent
$\mathrm{(II)}$ In addition to the above, the following is true: there is a unique extremal function for \eqref{eqn-extr-ref} if and only if every  extremal function for \eqref{eqn-extr-ref} satisfies any one of the conditions (a)--(q) from part (I), and therefore all of them.
\end{thm}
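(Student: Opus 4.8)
The plan is to treat condition~(c) as a hub. First I would verify that (c) implies every other item by a direct computation on $f_n(z)=e^{(z^n-1)/(z^n+1)}$: here $g(z)=(z^n-1)/(z^n+1)=-1+2z^n-2z^{2n}+\dots$, so $b_0=-1$, $a_0=1/e$, $b_n=2$, $a_n=2/e$, all intermediate coefficients vanish, $P(z)=2a_0(z^n+1)$ has the $n$-th roots of $-1$ as zeros, and $W_nf_n=f_n$; reading off (a),(b),(g)--(q) from this is routine. The substance is the reverse direction, and my plan is to show that each of (d)--(q) forces the single equality (a), namely $a_n=2a_0$, and then to prove the two genuinely substantive implications (a)$\Rightarrow$(b) and (b)$\Rightarrow$(c), which closes every loop.

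Most of the spokes collapse to (a) using tools from Section~1. For the root conditions, \eqref{a_n-a_0} gives $a_n=2(-1)^na_0\prod_k\la_k$, so (a)$\Leftrightarrow$(g); since each $|\la_k|\ge1$ while $(-1)^n\prod\la_k$ is real and at least $1$, equality of this product with $1$ is equivalent to all zeros lying on the circle, yielding (a)$\Leftrightarrow$(g)$\Leftrightarrow$(h), and (i) drops out once (a)+(b) force $P(z)=2a_0(z^n+1)$. For the coefficient identities I would use Proposition~\ref{prop-var}(b): as $a_n$ and $b_0$ are real, the relation $\mathrm{Re}\{a_nb_0+\dots+a_0b_n\}=0$ splits into (j)$\Leftrightarrow$(k), and combining (j) with the bound $\mathrm{Re}\,b_n\le2|b_0|$ forces $a_n/a_0\le2$, whence (a); the estimate $a_n=a_0\,\mathrm{Re}\,b_n\le2|b_0|e^{-|b_0|}\le2/e$ together with $a_n\ge2/e$ handles (l), and the same $2xe^{-x}$ computation handles (d),(e), which by Lemma~\ref{lem-ind} give $a_n=a_0b_n$. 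For the Wiener group, $B_0=1$ forces the Blaschke factor in \eqref{factor} to be constant, so (n)$\Rightarrow$(o); when $W_nf$ is non-vanishing, $H$ from \eqref{fcn-h} lies in $\cb_\ast$ with $H'(0)=a_n$, and Lemma~\ref{lem-n=1} gives $a_n\le2/e$, hence $a_0=1/e$, $a_n=2/e$, i.e.\ (a); and (p),(q) imply (o) because $n$-fold symmetry makes $W_nf=f$ non-vanishing. Finally (f) forces $a_0=1/e$ and, via a Schwarz-type rigidity for $H$, also $H(0)=0$ and $a_n=2a_0$, while (m) is precisely equality in Carath\'eodory plus the symmetric configuration, giving (c) directly through the symmetrization identity $\tfrac1n\sum_{\a^n=-1}\tfrac{\a z+1}{\a z-1}=\tfrac{z^n-1}{z^n+1}$.

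The heart of the argument, and the step I expect to be the main obstacle, is (a)$\Rightarrow$(b). Assuming $a_n=2a_0$, the root analysis already places all $n$ zeros of $P$ on the unit circle. Since $\mathrm{Re}\,P\ge0$ on $\overline{\D}$, Fej\'er's lemma lets me write $\mathrm{Re}\,P(e^{it})=|Q(e^{it})|^2$ for a polynomial $Q$ of degree $n$; because $P(\la_k)=0$ forces $\mathrm{Re}\,P(\la_k)=0$, the zeros of $Q$ are exactly the $\la_k$, so $Q(z)=\g\prod_k(z-\la_k)$. Writing $P^*(z)=z^n\overline{P(1/\bar z)}$, the boundary identity $2z^n\,\mathrm{Re}\,P=z^nP+P^*$ becomes $z^nP(z)+P^*(z)=2Q(z)Q^*(z)$ as an identity of degree-$2n$ polynomials; using $|\la_k|=1$ one finds $Q^*$ is a unimodular multiple of $Q$, and matching leading coefficients collapses this to $P(z)^2=2a_0\,z^nP(z)+2a_0\,P^*(z)$. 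The punch line is a degree count: rewriting as $2a_0\,P^*(z)=P(z)\,(P(z)-2a_0z^n)$, the left side has degree exactly $n$ while the right has degree $n+\deg(P-2a_0z^n)$, forcing $\deg(P-2a_0z^n)=0$, that is $a_1=\dots=a_{n-1}=0$, which is (b). The delicate points are the multiplicity bookkeeping in the Fej\'er factorization when $P$ has repeated roots and the leading-coefficient normalization, but these are exactly the checks the preliminary lemmas are designed to supply.

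For (b)$\Rightarrow$(c), once $a_1=\dots=a_{n-1}=0$ the set $I=\{1,\dots,n-1\}$ satisfies $n\notin G(I)$, so by Lemma~\ref{lem-ind} it is inductive and $a_n=a_0b_n$; the $2xe^{-x}$ estimate then pins $a_0=1/e$, $b_0=-1$, $b_n=2$. Now $-g$ lies in the class $P$ with $c_1=\dots=c_{n-1}=0$ and $|c_n|=2$, i.e.\ equality in Carath\'eodory's lemma, so its Herglotz measure is supported on the $n$-th roots of a fixed unimodular number; the vanishing of the intermediate coefficients forces, by a finite Fourier computation, that the masses are equal, and the symmetrization identity then gives $g(z)=(z^n-1)/(z^n+1)$, i.e.\ $f=f_n$. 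This completes part~(I). For part~(II), I would use that if $f$ is extremal then so is $f(\om z)$ with $\om=e^{2\pi i/n}$, since its $n$-th coefficient is $\om^na_n=a_n$ and it lies in $\cb_\ast$; hence uniqueness forces $f(\om z)=f$, that is $W_nf=f$, which is condition~(p), so by part~(I) the unique extremal function equals $f_n$. Conversely, if every extremal function satisfies one of (a)--(q), then by part~(I) each equals $f_n$, so the extremal function is unique. The overall risk in the plan is concentrated in the third paragraph; everything else is a matter of assembling the estimates and rigidity statements already prepared in Section~1.
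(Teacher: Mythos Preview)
Your hub-and-spoke scheme through (c), with the chain (a)$\Rightarrow$(b)$\Rightarrow$(c) carrying the real weight, is exactly the paper's architecture, and the handling of Part~(II) and of the ``easy'' spokes (d),(e),(g)--(q) is essentially the same as the paper's, sometimes with the arrows run in a slightly different order. Two places deserve comment.

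\emph{The implication \textup{(a)}$\Rightarrow$\textup{(b)}.} Here your route is genuinely different from the paper's. The paper does \emph{not} go through (h): it evaluates $\mathrm{Re}\,P$ directly at the $n$-th roots $\omega_k$ of $-1$, notes that $a_n=2a_0$ kills the two end terms so that $\mathrm{Re}\{a_{n-1}\omega_k+\dots+a_1\omega_k^{n-1}\}\ge 0$, and then uses the averaging identity $\sum_k\omega_k^{\,j}=0$ to force each of these to vanish. Fej\'er then pins $Q^2=C(z^n+1)^2$ because the $\omega_k$ are $n$ \emph{distinct} points, so no multiplicity bookkeeping is required; comparing real parts on the circle finishes. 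Your self-reciprocal/degree-count idea is nice, but the step ``the zeros of $Q$ are exactly the $\lambda_k$'' is not justified when $P$ has repeated roots. In fact, once you observe that (g) forces $P^*=P$ (since $P^*=(-1)^n\overline{\prod\lambda_k}\,P$), the identity becomes $(z^n+1)P=2QQ^*$, and a root $\lambda$ of $P$ on $\T$ with $\lambda^n\neq-1$ would appear in $Q$ with only \emph{half} its $P$-multiplicity, so $Q$ need not be proportional to $P$. The clean fix is to bypass that claim: every unimodular zero of $QQ^*$ has even order, while each $n$-th root of $-1$ contributes odd order on the left unless it is already a root of $P$; hence all $n$ roots of $-1$ lie among the $n$ roots of $P$, so $P=2a_0(z^n+1)$ and (b) follows. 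Your degree-count conclusion then becomes a corollary rather than the engine.

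\emph{The implication \textup{(f)}$\Rightarrow$\textup{(c)}.} This is a genuine gap. Condition (f) gives $b_0=-1$ (hence $a_0=1/e$) and $b_1=\dots=b_{k-1}=0$. When $k>n/2$ the inductive-set machinery applies and yields $a_n=a_0 b_n$, after which the $2xe^{-x}$ bound finishes. But in the borderline case $n=2k$ one has $n=k+k\in G(J)$, so the inductive route is blocked, and there is no ``Schwarz-type rigidity'' forcing $H(0)=0$; that claim is simply false a priori. The paper closes this case with Lemma~\ref{lem-livingst}: the recursion gives $a_{2k}=a_0\bigl(b_{2k}+\tfrac12 b_k^{\,2}\bigr)$, and the Livingston-type bound $\bigl|b_{2k}+\tfrac12 b_k^{\,2}\bigr|\le 2$ yields $a_n\le 2a_0=2/e$, hence $a_n=2/e$ and (a). Your sketch does not supply this ingredient.
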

%%%%%%%%%%%%%%%%%%%%%
\par\medskip
\textbf{Some remarks}.
\begin{itemize}
\item
The implication (d)\,$\Rightarrow$\,(c), which is a consequence of our theorem, is thus a generalization of Brown's result \cite{Br1} for $p=\infty$ and an improvement of the result of Peretz \cite{Pe1} mentioned earlier since it yields directly that if $f$ is extremal, $n$ is odd, and $a_1=a_3=\ldots=a_{n-2}=0$ then $a_0=1/e$ and $a_n=2/e$.
\item
According to the above findings, proving the conjecture amounts to  showing that $N=n$ and the following sets of numbers coincide:
\par
- $\{\a_1,\ldots,\a_N\}$, the rotation coefficients in the point masses in extremal functions as in \eqref{str-extr-fcn},
\par
- $\{\om_1,\ldots,\om_n\}$, the $n$-th roots of $-1$,
\par
- $\{\la_1,\ldots,\la_n\}$, the roots of the polynomial $P$ associated with the extremal function $f$,
\par
- the zeros of Re\,$P=|Q|^2$ on $\T$.
\item
Alternatively, it suffices to show the uniqueness of the extremal function for \eqref{eqn-extr-ref}. According to D.~Khavinson, this so far unpublished fact was already known earlier to various experts, for example, to Stephen D. Fisher.
\item
It should be noted that neither of the conditions $n\notin G(I)$, $n\notin G(J)$, apparently equivalent to those from the above list, is included in the theorem. The reason is that in both statements (a) and (b) in Lemma~\ref{lem-ind} one implication was proved without checking whether the perturbed coefficients actually correspond to an admissible function (that is, to one for which Re\,$g<0$ in $\D$) and it is unclear whether that implication also holds for the more restricted class of coefficients of admissible function as opposed to the class of coefficients considered in Lemma~\ref{lem-ind}. However, Lemma~\ref{lem-ind} provides sufficient conditions for $I$ being $n$-inductive or $J$ being exponentially $n$-inductive that are easy to check.
\end{itemize}
\par
%%%%%%%%%%%%%%%%%%%%%
\begin{proof} \underline{\textbf{Part (I)}}. The scheme of the proof is as follows. We first show that (a) $\Rightarrow$ (b) $\Rightarrow$ (c) $\Rightarrow$ (i) $\Rightarrow$ (h) $\Rightarrow$ (g) $\Rightarrow$ (a). We will then see that (c) $\Rightarrow$ (m) $\Rightarrow$ (b).
\par
Next, we will see that (c) $\Rightarrow$ (d) $\Rightarrow$ (l) and also (c) $\Rightarrow$ (e) \,$\Rightarrow$\,(l) $\Rightarrow$ (j)\,$\Rightarrow$\, (k)\,$\Rightarrow$\,(a) and this will close some further loops.
\par
Then we will verify that (c) $\Leftrightarrow$ (f).
\par
Finally, we will show that (c) $\Rightarrow$ (p) $\Rightarrow$ (o) $\Rightarrow$ (n) $\Rightarrow$ (a) and will also check that (c) $\Rightarrow$ (q) $\Rightarrow$ (b), which will complete the equivalence of the 17 conditions (a)--(q).
\par
We note that all arguments are quite brief (due to the work done previously) with the exception of the first two, which together account for about two pages.
\par\medskip
\fbox{(a) $\Rightarrow$ (b):} Let $f$ be an extremal function. By Proposition~\ref{prop-var}, we know that the coefficients of $f$ satisfy the following condition:
\begin{equation}
 \mathrm{Re\,} \{a_n + 2 a_{n-1} \la + \ldots + 2 a_1 \la^{n-1} + 2 a_0 \la^n\} \ge 0\,, \quad \textrm{whenever } |\la|\le 1\,.
 \label{coeff-cond}
\end{equation}
Let us write
$$
 \om_k = e^{(2k+1) \pi i / n}\,, \quad k= 0, 1,\ldots, n-1\,,
$$
for the $n$-th roots of $-1$. Our assumption that $a_n=2
a_0$ shows that whenever $\la=\om_k$, $k= 0, 1,\ldots, n-1$, we have
$\mathrm{Re\,} \{a_n + 2 \om_k^n a_0\} = 0$. Hence from
\eqref{coeff-cond} we conclude that for each of these values
\begin{equation}
 \mathrm{Re\,} \{a_{n-1} \om_k + \ldots + a_1 \om_k^{n-1}\} \ge 0\,, \quad k= 0, 1,\ldots, n-1\,.
 \label{coeff-simple}
\end{equation}
By basic algebra, for any fixed $j$ with $1\le j\le n-1$ we have
$$
 \sum_{k=0}^{n-1} \om_k^j = e^{\pi i j/ n} \sum_{k=0}^{n-1} e^{2 k j \pi i / n}	
 = e^{\pi i j/ n} \frac{1 - e^{2 j \pi i}}{1 - e^{2 j \pi i / n}} = 0\,.
$$
Thus, summing up all terms that appear on the left in \eqref{coeff-simple} over $k=0,1,\ldots,n-1$, we get
$$
 \sum_{k=0}^{n-1} \mathrm{Re\,} \{a_{n-1} \om_k + a_{n-2} \om_k^2 + \ldots + a_1 \om_k^{n-1}\} = \mathrm{Re\,} \sum_{j=1}^{n-1} \left\{a_{n-j} \sum_{k=0}^{n-1} \om_k^j \right\} = 0\,.
$$
Since every summand on the left-hand side in the above formula is non-negative by \eqref{coeff-simple}, all of them must be zero:
\begin{equation}
 \mathrm{Re\,} \{a_{n-1} \om_k + a_{n-2} \om_k^2 + \ldots + a_1 \om_k^{n-1}\} = 0\,, \quad
 k = 0, 1,\ldots, n-1\,,
 \label{coeff-zero}
\end{equation}
hence also
\begin{equation}
 \mathrm{Re\,} \{a_n + 2 a_{n-1} \om_k + \ldots + 2 a_1 \om_k^{n-1} + 2 a_0 \om_k^n\} = 0\,, \quad k= 0, 1,\ldots, n-1\,,
 \label{all-zero}
\end{equation}
in view of our choice of $\om_k$ and the assumption that $a_n=2 a_0$.
\par
As remarked before, the function
$$
 T(t) = \mathrm{Re\,} \{a_n + 2 a_{n-1} \la + \ldots + 2 a_1 \la^{n-1} + 2 a_0 \la^n\}
$$
is a trigonometric polynomial of degree $n$ of the variable $t\in
[0,2\pi]$, where $\la=e^{i t}$. Since $T(t)\ge 0$ on the circle,
Fej\'er's Lemma tells us that for some coefficients $\g_0$,$\g_1$,\ldots,$\g_n$ we have
$$
 T(t)=|(\g_0+\g_1 \la +\ldots \g_n \la^n)^2|\,, \quad \la=e^{i t}\,.
$$
The complex polynomial $Q(z)=(\g_0+\g_1 z +\ldots \g_n z^n)^2$ has $2n$ zeros counting the multiplicities, each zero being obviously of order
at least two. But we know from \eqref{all-zero} that this polynomial
has at least $n$ distinct zeros $\om_k$, $k= 0, 1,\ldots, n-1$, which
are roots of $-1$, so each one of these zeros must be double and hence
$Q$ cannot have any other zeros. Thus, the polynomial factorizes as
$$
 Q(z) = (\g_0+\g_1 z +\ldots \g_n z^n)^2 = C \prod_{k=0}^{n-1} (z -
 \om_k)^2 = C (z^n+1)^2 \,.
$$
Hence
$$
 \mathrm{Re\,} \{a_n + 2 a_{n-1} \la + \ldots + 2 a_1 \la^{n-1} + 2 a_0
 \la^n\} = |C (\la^n+1)^2| = 2 |C|\,\mathrm{Re\,} \{\la^n+1\}
$$
for all $\la$ on the unit circle. As was observed earlier, two
polynomials whose real parts are equal on the unit circle must coincide
everywhere, except for an imaginary constant:
$$
 a_n + 2 a_{n-1} z + \ldots + 2 a_1 z^{n-1} + 2 a_0 z^n = 2 |C|
 (z^n+1) + ic\,, \qquad z\in\C\,, \quad c\in\R\,,
$$
but since we know that actually $a_n>0$, we finally have
$$
 c=0\,, \quad a_{n}=2 a_0 = 2 |C|\,, \quad a_1=a_2=\ldots=a_{n-1}=0\,, $$
which yields (b).
\par\smallskip
\fbox{(b) $\Rightarrow$ (c):} In view of the inequalities $a_0$,  $a_n>0$ and $b_0<0$, the recurrence relations \eqref{rec-exp} yield
$$
 a_n = a_0 b_n = |a_0 b_n| = \left| \frac{b_n}{b_0} \right| |b_0| e^{-|b_0|}\,.
$$
The function $g/b_0$ belongs to the normalized class $P$ so Carath\'eodory's lemma applies: $|b_n/b_0|\le 2$. The function $x e^{-x}$ achieves its maximum $1/e$ at $x=1$. Thus, we obtained the desired inequality $|a_n|\leq 2/e$.
\par
It is only left to discuss the case of equality. When this happens, we must have $|b_0|=1$, hence $b_0=-1$ and $a_0=1/e$. Also, in order for equality to hold in Carath\'eodory's lemma: $|b_n/b_0|=2$, the measure in the Herglotz representation of the exponent $g$ must supported on a set where $e^{-i n t}$ has constant signum. Thus, each of the numbers $\a_j$, $1\le j\le N$, is one of the $n$-th roots of some $\z$ such that $|\z|=1$. By the geometric series expansion of the terms in the exponent and by comparing coefficients we get
$$
 b_0=-\sum_{j=1}^N r_j\,, \quad b_1=-2\sum_{j=1}^N r_j \a_j\,, \ldots, \quad b_{n-1}=-2\sum_{j=1}^N r_j \a_j^{n-1}\,.
$$
We already know that $b_0=-1$ and $b_1=\ldots=b_{n-1}=0$, hence
$$
 \sum_{j=1}^N r_j=1\,, \quad \sum_{j=1}^N r_j \a_j=0\,, \ldots, \quad  \sum_{j=1}^N r_j \a_j^{n-1}=0\,.
$$
We may enlarge the set $\{\a_j\,\colon\,1\le j\le N\}$ so as to include all the $n$-th roots of $\z$ (if $N<n$) and then may reorder it so as to obtain the complete system
$$
 \sum_{j=1}^n r_j=1\,, \quad \sum_{j=1}^n r_j \a_j=0\,, \ldots, \quad  \sum_{j=1}^n r_j \a_j^{n-1}=0
$$
of $n$ linear equations in $n$ unknowns $r_1$,\ldots,$r_n$, with $r_j=0$ for $N+1\le j\le n$. The determinant of this system is the Vandermonde determinant
$$
 V_n = \begin{vmatrix} 1 & 1  & \cdots & 1
\\
  \a_1 & \a_2 & \cdots & \a_n
\\
  \a_1^2 & \a_2^2 & \cdots & \a_n^2
\\
 \vdots & \vdots & \ddots & \vdots
\\
 \a_1^{n-1} & a_2^{n-1} & \cdots & \a_n^{n-1}
\end{vmatrix}
 = \prod_{1 \mathop \le i \mathop < j \mathop \le n} \left({\a_j - \a_i}\right) \neq 0\,,
$$
hence the system has a unique solution. In view of the properties of the sums of powers of the $n$-th roots of $\z$, the system is obviously satisfied when
$$
 (r_1,\ldots,r_n) = \(\frac1{n},\ldots,\frac1{n}\)
$$
and thus it follows that $N=n$. Let $\om = e^{(2\pi i)/n}$ be the primitive $n$-th root of $1$. Then $\{\a_1,\ldots,\a_n\} = \{\om^k\a_1,\ldots,\om^k \a_n\}$ for any fixed $k$ with $0\le k<n$ and therefore the function
$$
 g(z) = \frac1{n} \sum_{j=1}^n \frac{\a_j z+1}{\a_j z-1}
$$
has the property that $g(\om^k z)=g(z)$ for all $z$ in $\D$ and $0\le k<n$. Hence
$$
 W_n g(z) =\frac{ \sum_{k=0}^{n-1} g(z)}{n} = g(z)\,,
$$
so $g$ is a function of $z^n$ so $g$ is a function of $z^n$. Given our normalizations and that $d(z) = \prod_{j=1}^{n} \left( \alpha_j z -1 \right)$ also satisfies $W_n d (z) = d(z)$, which can be proved as we did for $g$, it now readily follows that
$$
  g(z) = \frac{z^n-1}{z^n+1}
$$
and this proves (c).
\par\smallskip
\fbox{(c) $\Rightarrow$ (i)} is easy since for the function given
by (c) we have
$$
 f(z) = e^{(z^n-1)/(z^n+1)} = \frac1{e}+\frac2{e} z^n +\ldots\,,
$$
hence $P(z)=\frac2{e} (z^n+1)$. This means that the numbers $\la_k$ are
precisely the $n$-th roots of $-1$.
\par\smallskip
\fbox{(i) $\Rightarrow$ (h)} is completely trivial.
\par\smallskip
\fbox{(h) $\Rightarrow$ (g)} clearly follows from \eqref{prod-roots}.
\par\smallskip
\fbox{(g) $\Rightarrow$ (a):} The assumption (g) means that
$(-1)^n \prod_{k=1}^n \la_k=1$. In view of \eqref{a_n-a_0}, this means
that $a_n=2 a_0$.
\par\medskip
\fbox{(c)\,$\Rightarrow$ (m):} Denoting again by $\om_j$ the $n$-th roots of $-1$, this follows from the identity
$$
 e^{\frac{z^n-1}{z^n+1}} = e^{\frac1{n} \sum_{j=0}^n \frac{\om_j z+1}{\om_j z-1}}
$$
which can be proved as earlier.
\par\medskip
\fbox{(m)\,$\Rightarrow$ (b):} Starting from formula \eqref{str-extr-fcn}, the geometric series expansion shows that
$$
 b_0=-\sum_{j=1}^n r_j\,, b_1=-2\sum_{j=1}^n r_j \a_j\,, b_2=-2\sum_{j=1}^n r_j \a_j^2\,, \ldots, b_n=-2\sum_{j=1}^n r_j \a_j^n\,.
$$
Thus,
$$
 \mathrm{Re\,} b_n \le |b_n| = 2 \left| \sum_{j=1}^n r_j \a_j^n \right| \le 2 \sum_{j=1}^n r_j = 2 |b_0|\,.
$$
By our assumption, equality must hold throughout but this is only possible if the signum of $\a_j^n$ is independent of $j$ and $b_n>0$, meaning that $\a_j^n=-1$ for all $j$. But then, taking into account the assumption that $r_1=r_2=\ldots =r_n$, the basic algebra of complex numbers and the above formulas show that
$$
 b_1=b_2=\ldots=b_{n-1}=0\,,
$$
the desired conclusion (b) follows by Lemma~\ref{lem-exp}. (Note that we could have also used the condition for equality in Carath\'eodory's lemma.)
\par\smallskip
\fbox{(c)\,$\Rightarrow$ (d):} It is clear that from (c) we get $b_1= \ldots =b_{n-1}=0$, hence $a_1=\ldots=a_{n-1}=0$ by \eqref{rec-exp}; that is, $I=\{1,2,\ldots,n-1\}$ in this case. Since \eqref{rec-exp} readily yields $a_n=a_0 b_n$, the set $I$ is clearly $n$-inductive.
\par\smallskip
\fbox{(d)\,$\Rightarrow$ (l):} If $I$ is $n$-inductive then $a_n=a_0 b_n$ and (l) follows trivially since $a_n$, $a_0>0$ by assumption.
\par\smallskip
\fbox{(c)\,$\Rightarrow$ (e):} From (c), $b_1= \ldots =b_{n-1}=0$, hence $J=\{1,2,\ldots,n-1\}$ in this case. Then \eqref{rec-exp} implies $a_n=a_0 b_n$, hence $J$ is exponentially $n$-inductive.
\par\smallskip
\fbox{(e)\,$\Rightarrow$ (l):} If $J$ is exponentially $n$-inductive then $a_n=a_0 b_n$ and (l) again follows trivially in view of $a_n$, $a_0>0$.
\par\smallskip
\fbox{(l)\,$\Rightarrow$ (j):} Assume that $a_n=a_0$\,Re\,$b_n$ holds. Recalling that $a_0=e^{b_0}=e^{-|b_0|}$, we have $a_n= |b_0| e^{-|b_0|} \dfrac{\mathrm{Re\,}b_n}{|b_0|}$. By Carath\'eodory's lemma for analytic functions with positive real part and value one at the origin (applied to the function $g/b_0$), we have
$$
 \frac{\mathrm{Re\,}b_n}{|b_0|} \le \left|\frac{b_n}{b_0}\right| \le 2\,.
$$
Since the maximum value of $x e^{-x}$ is $1/e$ and is attained at $x=1$, we get $a_n\le 2/e$ and hence $a_n=2/e$, so equality holds above. Thus, we must also have  $b_0=-1$ and (j) follows readily.
\par\smallskip
\fbox{(j)\,$\Rightarrow$ (k):} Immediate from \eqref{cond-coeff}.
\par\smallskip
\fbox{(k)\,$\Rightarrow$ (a):} Apply \eqref{cond-coeff} again to get Re\,$\{a_0 b_n +a_n b_0\} = 0$. Thus, recalling that $a_0$, $a_n>0$ and $b_0<0$, it follows that
\[
 a_n = a_0\,\mathrm{Re\,}\left\{\frac{b_n}{-b_0}\right\} \le a_0 \left| \frac{b_n}{b_0} \right| \le 2 a_0
\]
by Carath\'eodory's lemma applied to $g/b_0$. Recalling that $a_n\ge 2 a_0$, (a) follows.
\par\smallskip
\fbox{(c)\,$\Rightarrow$ (f):} Obviously true with $k=n$ and $H\equiv 1$.
\par\smallskip
\fbox{(f)\,$\Rightarrow$ (c):} If $f$ is as in the assumptions of condition (f) and we write $f=e^g$ as before, then
\begin{eqnarray*}
 g(z) &=& - 1 + 2 z^k H(z) - 2 z^{2 k} H(z)^2 + \ldots
\\
 &=& -1 +b_k z^k + b_{k+1} z^{k+1} + \ldots + b_n z^n + \ldots
\end{eqnarray*}
In other words, $b_1=b_2=\ldots=b_{k-1}=0$. If $n$ is odd then obviously $k\ge (n+1)/2$, hence it follows that $b_i=0$ whenever $1\le i<(n+1)/2$ and then it follows from \eqref{rec-exp} that also  $a_i=0$ whenever $1\le i<(n+1)/2$. Then Brown's result (whose hypotheses are a special case of our condition (d) because of Lemma~\ref{lem-ind}) implies the conjecture and hence our condition (c).
\par
The more interesting case is when $n$ is even and $k=n/2$ when the hypotheses of our condition (d) are not automatically fulfilled and hence there is something to prove. (The case $k>n/2$ is, of course, easier and follows from (d) and Lemma~\ref{lem-ind} because again it is part of Brown's result mentioned earlier.) Writing $n=2k$ and taking into account that then $b_1=b_2=\ldots=b_{k-1}= 0= a_1=a_2=\ldots=a_{k-1}$, we again have by \eqref{rec-exp} that $a_k=b_k a_0$ and, in view of $a_0=e^{-1}$, this leads to
$$
 a_{2k} = \sum_{j=1}^{2k} \frac{j}{2k} a_{2k-j} b_j = \frac12 a_k b_k + a_0 b_{2k} = \(\frac12 b_k^2 + b_{2k} \) a_0 = \frac1{e} \(\frac12 b_k^2 + b_{2k} \)\,.
$$
The inequality $a_n=a_{2k}\le 2/e$ now follows by Lemma~\ref{lem-livingst}. Since we already know that for our extremal function $a_n\ge 2/e$, it follows that $a_n=2/e=2 a_0$, which is (a), and we already know that this implies (c).
\par\smallskip
\fbox{(c)\,$\Rightarrow$ (p):} Follows because the (only) extremal function $f$ in (c) is an analytic function of $z^n$.
\par\smallskip
\fbox{(p)\,$\Rightarrow$ (o):} Obvious.
\par\smallskip
\fbox{(o)\,$\Rightarrow$ (n):} If $W_n f$ does not vanish in the disk, the Blaschke factor $B$ of $H$ is constant: $B_0=1$, which is (n).
\par\smallskip
\fbox{(n)\,$\Rightarrow$ (a):} Follows from Proposition~\ref{prop-var} and Proposition~\ref{prop-wien} which combined yield $2\le \dfrac{a_n}{a_0}\le 2$.
\par\smallskip
\fbox{(c)\,$\Rightarrow$ (q):} Clear from the fact that the exponent is an analytic function of $z^n$.
\par\smallskip
\fbox{(q)\,$\Rightarrow$ (b):} Under the assumption (q), we have (for example, by Lemma~\ref{lem-exp})
$$
 f(z) = e^{b_0 + b_n z^n + \ldots} = e^{b_0} + b_n e^{b_0} z^n + \ldots \,,
$$
which readily implies (b).
\par\bigskip
\underline{\textbf{Part (II)}.}
\par
Suppose first that the extremal function for \eqref{eqn-extr-ref} is unique and denote it by $f$, keeping the notation as in part (I). Let $\om = e^{(2\pi i)/n}$ and define $g_j(z)=f(\om^j z)$, for $j=1$, \ldots, $n$. It is clear from the Taylor series of $f$ that
the constant term  of $g_j$ is $a_0$ and its $n$-th coefficient is $a_n \om^{j n}=a_n$. In view of the uniqueness of our extremal function, it follows that $g_j=f$ for all values of $j$. Hence Wiener's trick yields
$$
 W_n f (z)= \frac1{n} \sum_{j=0}^{n-1} f(\om^j z) = \frac1{n} \sum_{j=0}^{n-1} g_j (z) = f(z)
$$
so $W_n f\in \cb_\ast$ and the $n$-th coefficient of $W_n f$ is $a_n$ but at the same time also equals $H^\prime (0)$ for the function $H$ defined by \eqref{fcn-h} as before, hence by Lemma~\ref{lem-n=1}, it is bounded by $2/e$. Moreover, under the normalization \eqref{eqn-extr-ref} imposed, equality holds if and only if $H(z)= e^{(z-1)/(z+1)}$ by the remark following Lemma~\ref{lem-n=1}. That is, if and only if
\[
 W_n f(z)= e^{\frac{z^n-1}{z^n+1}}\,.
\]
In view of the equality $f(z)=W_n f(z)$, condition (c) from part (I) follows. Recalling that all conditions (a)--(q) are equivalent, $f$ must satisfy all other conditions as well.
\par
Conversely, if any of the conditions from part (I) holds, then also (c) holds. But condition (c) gives the uniqueness of the extremal function. This ends the proof.
\end{proof}					
%%%%%%%%%%%%%%%%%%%%%
\par\medskip
\textbf{Closing remarks}.
\begin{itemize}
\item
At the present time, we are not able to deduce the desired equality $a_n= 2 a_0$ for an extremal function.
\item
Also, it is not clear to us how one can show that $a_0=1/e$ holds for extremal functions nor whether this equality implies the other conditions without further assumptions on extremal functions.
\item
At this point we are not able to show the uniqueness of extremal functions either.
\end{itemize}
\par
In summary, the Krzy\.z conjecture remains open. However, it is our hope that the statements proved here may point out in some new directions for further research on the problem.

%%%%%%%%%%%%%%%%%%%%%%%%%%%%%%%%%%%%%%%%%%%%%%%%%%%%%%%%%%%%%%%%%%%%%%

\end{document}